\crefname{mainresult}{Theorem}{Theorems}
\renewcommand{\PrintDOI}[1]{\doi{#1}}
\let\setminus=\smallsetminus
\renewcommand{\leq}{\leqslant}
\renewcommand{\geq}{\geqslant}
\renewcommand{\ge}{\geq}
\renewcommand{\le}{\leq}
\newtheorem{theorem}{Theorem}[section]
\newtheorem{lemma}[theorem]{Lemma}
\newtheorem{example2}[theorem]{Example}
\newtheorem{mainresult}{Theorem} 
\newenvironment{customthm}[1]
  {\innercustomthm}
  {\endinnercustomthm}
\theoremstyle{definition}
\newtheorem{definition}[theorem]{Definition}
\theoremstyle{remark}
\newtheorem{claim}{Claim}
\crefname{claim}{Claim}{Claims}
\newenvironment{claimproof}{\noindent\textit{Proof.}}{\hfill\ensuremath{\blacksquare}\medskip}
\newcommand{\red}[1]{{\color{red}{#1}}}
\newcommand{\blue}[1]{{\color{blue}{#1}}}
\newcommand{\COMMENT}[1]{{}}
\let\theta=\vartheta
\let\rho=\varrho
\let\phi=\varphi
\def\calCommandfactory#1{%
  \expandafter\def\csname c#1\endcsname{\mathcal{#1}}}
\def\frakCommandfactory#1{%
  \expandafter\def\csname frak#1\endcsname{\mathfrak{#1}}}
\newcounter{ctr}
  \edef\X{\@Alph\c@ctr}
\newcommand{\change}[2]{\ifbool{TrackBool}{\blue{#1} \red{#2}}{{#1}}}
\newcommand{\tcd}{tree-cut de\-com\-po\-si\-tion}
\newcommand{\alphathin}{$\alpha$-thin}
\newcommand{\almostalphathin}{almost~\alphathin}
\newcommand{\edgecon}{edge-con\-nect\-ed}
\title{A grid theorem for strong immersions of walls}
\author{Reinhard Diestel \and Raphael W. Jacobs \and Paul Knappe}
\address{Universität Hamburg, Department of Mathematics, Bundesstraße 55 (Geomatikum), 20146 Hamburg, Germany}
\email{\{raphael.jacobs, paul.knappe\}@uni-hamburg.de}
\author{Paul Wollan}
\address{University of Rome, “La Sapienza”, Department of Computer Science, Via Salaria 113, 00198 Rome, Italy}
\email{wollan@di.uniroma1.it}
\keywords{Strong immersion, wall, grid theorem}
\subjclass[2020]{05C75, 05C83, 05C40}
\begin{document}

\begin{abstract}
    We show that a graph contains a large wall as a strong immersion minor if and only if the graph does not admit a tree-cut decomposition of small `width', which is measured in terms of its adhesion and the path-likeness of its torsos.
\end{abstract}

\maketitle

\vspace{-0.5cm}

\section{Introduction} \label{sec:Introduction}

In their Graph Minors Project~\cite{GM}, Robertson and Seymour investigated the structure of graphs not containing a fixed graph as a minor.
An important example of their structure theorems as well as a result central to their project is the \emph{grid theorem}~\cite{GMV} which asserts the equivalence of large tree-width and the existence of large grid minors.
More precisely, every graph of large enough tree-width contains a large grid as a minor, and large grid minors form an obstruction to small tree-width in that large grids have large tree-width and every graph has tree-width at least the tree-width of each of its minors.
An equivalent formulation of the grid theorem is that a graph has large tree-width if and only if it contains a large wall as a topological minor~\cite{GMXIII}.

Grid theorem-like results, which provide the equivalence of large width with respect to some width-measure and a large (often grid-like) substructure in terms of a corresponding containment relation, have since been proven in various other settings.
For example, Kreutzer and Kawarabayashi proved a directed grid theorem~\cite{DigraphGridThm}, which establishes an equivalence of large directed tree-width and the existence of butterfly minors of large directed grids, and Geelen, Kwon, McCarty and Wollan showed in~\cite{VxMinorsGrid} that a graph has large rank-width if and only if it contains a vertex-minor isomorphic to a large comparability-grid.

In this paper we are concerned with the structure of graphs not containing a large wall as a strong immersion minor.
As the minor relation, immersion minors generalise the notion of topological minors, but they are unrelated to minors in that neither the existence of an immersion minor implies the existence of a minor nor vice-versa.
Immersion minors have attracted attention from various algorithmic and structural viewpoints in recent years, often finding results analogous to those for minors~\cites{GMXXIII,WollanImmersion,devos2014minimum,abu2003graph,ferrara2008h,GKMW-TopSubgraphsFPT,dvorak2014strong,dvorak2016structure,Immersionminimalinfinitelyedgecongraph}.

There are two versions of the immersion relation, a weak one and a strong one.
They are defined as follows.
A \emph{weak immersion} of a graph~$H$ in a graph~$G$ is a map~$\alpha$ with domain~$V(H)\cup E(H)$ that embeds~$V(H)$ into~$V(G)$ and maps every edge~$uv\in H$ to an~$\alpha(u)$--$\alpha(v)$ path in~$G$ which is edge-disjoint from every other such path.
If, additionally, these paths have no internal vertices in~$\alpha(V(H))$, then~$\alpha$ is a \emph{strong immersion} of~$H$ in~$G$.
The vertices in~$\alpha(V(H))$ are the \emph{branch vertices} of this immersion.
We say that~$H$ is \emph{weakly/strongly immersed} in~$G$, or that~$H$ is a \emph{weak/strong immersion minor} of~$G$, if there is a weak/strong immersion of~$H$ in~$G$.

While the (topological) minor relation behaves well with tree-decompositions, the immersion relations do so with \emph{{\tcd}s}, which form the edge-cut analogue of tree-decompositions.
Formally, a pair~$(T, \cX)$ is a~\emph{{\tcd}} of a graph~$G$ if~$T$ is a tree and~$\cX = (X_t)_{t \in T}$ a near-partition of~$V(G)$, that is, the~$X_t$ are disjoint and their union is~$V(G)$ but, in contrast to a partition, the~$X_t$ may be empty.
The vertex sets~$X_t$ are the \emph{parts} of~$(T, \cX)$.
The \emph{torso of~$(T, \cX)$ at a node~$t \in T$} arises from~$G$ by identifying for every component~$T'$ of~$T - t$ the vertices in~$\bigcup_{t' \in T'} X_{t'}$ to a single vertex, keeping parallel edges as they arise but omitting loops.
These new identification vertices are the \emph{peripheral vertices} of the torso, while the ones in~$X_t$ are its \emph{core vertices}.
Every edge~$t_1 t_2 \in T$ induces its \emph{adhesion set}~$E_G(\,\bigcup_{t \in T_1} X_t\,,\,\bigcup_{t \in T_2} X_t\,)$ where~$T_1$ and~$T_2$ are the two components of~$T - t_1 t_2$ with~$t_1 \in T_1$ and~$t_2 \in T_2$ and, for $X,Y \subseteq V(G)$, $E_G(X,Y)$ denotes the set of edges $xy$ of $G$ with $x \in X$ and $y \in Y$.
The \emph{adhesion} of a {\tcd} then is the maximum size of its adhesion sets.

DeVos, McDonald, Mohar and  Scheide~\cite{devos2013note} and~Wollan~\cite{WollanImmersion} independently showed that if a graph does not admit~$K^\ell$, the complete graph of size~$\ell$, as a weak immersion, then it admits a \tcd\ each of whose torsos contains at most~$\ell$ vertices of degree at least~$\ell^2$.
As a qualitative converse, they proved that a~\tcd\ in which each torso contains at most~$\ell$ vertices of degree at least~$\ell$ precludes the existence of a weak immersion of~$K^{\ell+1}$. 
Introducing the concept of tree-cut width, Wollan~\cite{WollanImmersion} also derived a grid theorem for weak immersions, which establishes the equivalence of large tree-cut width and the existence of weak immersions of large walls.
His proof made use of the classical grid theorem for excluded wall minors by Robertson and Seymour~\cites{GMV}.

For strong immersions Dvo{\v{r}}{\'a}k and Wollan proved the following general structure theorem describing the structure of graphs not containing a fixed graph as a strong immersion minor.
\begin{theorem}[\cite{dvorak2016structure}*{Theorem~4}] \label{thm:StrongImmersionStructureThm}
    For every graph~$F$, there exists an integer~$\alpha = \alpha(F) > 0$ such that if a graph~$G$ does not contain~$F$ as a strong immersion minor, then there exists a \tcd\ of~$G$ of adhesion less than~$\alpha$ such that each of its torsos is~$\alpha$-basic.
\end{theorem}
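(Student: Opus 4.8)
The plan is to argue by contradiction, following the pattern standard for excluded-minor structure theorems. Write $h:=|V(F)|$; since $F$ is a subgraph of $K^{h}$, a strong immersion of $K^{h}$ entails one of $F$, and since at a branch vertex of a strong immersion the first edges of the emanating paths are pairwise distinct we have $\deg_{G}(\alpha(v))\geq\deg_{F}(v)$, so the hypothesis already excludes $K^{h}$ strongly and limits the ``local supply'' of high-degree vertices along any potential immersion. I would fix the constant $\alpha=\alpha(F)$ only at the very end; then, among all \tcd s of $G$ of adhesion less than~$\alpha$, I pick one $(T,\cX)$ that is as fine as possible --- say, lexicographically minimising the multiset of torso orders --- and, if every torso is $\alpha$-basic we are done, so I assume the torso~$H$ at some node~$t$ is not $\alpha$-basic.

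First I would observe that $H$ admits no edge-cut of size less than~$\alpha$ splitting it into two parts neither of which is already $\alpha$-basic: such a cut would let us subdivide~$t$ and, after a bounded clean-up (absorbing small and low-degree pieces into adjacent bags), produce a strictly finer \tcd\ of $G$ whose adhesion is still below~$\alpha$, contrary to the choice of $(T,\cX)$. From this I would extract that $H$ carries an edge-tangle of large order which is \emph{branching} rather than path-like --- path-likeness ``up to~$\alpha$'' being precisely what the definition of $\alpha$-basic rules out. Since the adhesion of $(T,\cX)$ is below~$\alpha$, re-expanding each peripheral vertex of $H$ costs fewer than~$\alpha$ edges, so this branching, highly edge-connected structure persists inside $G$ itself, with tangle order, separation sizes and the relevant vertex degrees all changed by at most a constant depending on~$\alpha$.

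With such a core $G_{0}\subseteq G$ in hand, the next step is to route a strong immersion of~$F$ inside it. The two engines are Wollan's grid theorem for weak immersions~\cite{WollanImmersion} --- large tree-cut width forces a weak immersion of a large wall, and $G_{0}$, not being $\alpha$-basic, has large tree-cut width (a graph of bounded tree-cut width being easily seen to be $\alpha$-basic) --- and the weak-immersion structure theorem of~\cite{devos2013note} and~\cite{WollanImmersion}, which once $K^{N}$ is excluded as a weak immersion confines each torso's high-degree vertices to a bounded set. So $G_{0}$ either weakly immerses $K^{N}$ for $N=N(\alpha)$, supplying many vertices of huge degree pairwise joined by edge-disjoint paths, or it has large tree-cut width while excluding weak $K^{N}$, supplying a huge weak wall through a part of bounded non-apex degree. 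In either case I would use the \emph{branching} character from the previous step to upgrade ``weak'' to ``strong'': whenever an immersion path threatens to run through a prospective branch vertex, the branching tangle should provide an alternative route around it, so the $\leq\binom{h}{2}$ connecting paths can be taken simultaneously edge-disjoint and internally disjoint from the $h$ branch vertices --- this is the same mechanism (splitting and recombining routes at high-degree crossing points) by which a large planar grid manages to strongly immerse $K_{5}$ and $K_{3,3}$, which it cannot contain topologically. Placing $V(F)$ on $h$ suitable high-degree vertices and $E(F)$ on such paths yields a strong immersion of $K^{h}$, hence of~$F$, in $G_{0}\subseteq G$ --- the contradiction.

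The hard part will be this weak-to-strong upgrade --- the alternative-route bookkeeping --- together with isolating the correct ``branching'' invariant and checking both that a non-$\alpha$-basic torso of a finest \tcd\ must carry it and that it survives re-expansion. Some work here is unavoidable for a structural reason: a long path of large parallel-edge bundles weakly immerses arbitrarily large complete graphs yet strongly immerses nothing beyond a path, precisely because it has no alternative routes around its cut vertices; graphs of this kind must therefore be counted as $\alpha$-basic, and the definition of $\alpha$-basic has to be tuned --- taking into account, among other things, the planarity and maximum degree of~$F$ --- so that its negation, together with high edge-connectivity, delivers exactly the rich branching structure the routing needs. Making all the quantities (adhesion, tangle order, wall size, vertex degrees) close up under a single $\alpha=\alpha(F)$ is the final, purely quantitative, hurdle.
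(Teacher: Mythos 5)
This theorem is imported into the paper: it is cited as \cite{dvorak2016structure}*{Theorem~4} and used as a black box, so there is no proof of the paper's own against which to compare your attempt. What you have written is therefore best read as a blind reconstruction of Dvořák and Wollan's argument, and at that level it is an outline rather than a proof: you correctly identify the standard shape (pick a finest bounded-adhesion \tcd, show a non-$\alpha$-basic torso forces a highly edge-connected ``branching'' core, route a strong immersion of $F$ into it), but every step on which the statement actually turns is deferred, and some of the deferrals hide real obstacles.

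Concretely: (1)~You never fix what $\alpha$-basic means, only that it ``has to be tuned''; but the implication you need, namely that a non-$\alpha$-basic torso of a finest \tcd\ carries a branching tangle, has no content until the definition is pinned down, and the paper's own statement presupposes a fixed definition. (2)~Your claim that a small edge-cut of a torso $H$ ``lets us subdivide $t$'' into a strictly finer \tcd\ of adhesion still below $\alpha$ is not automatic: an edge-cut of $H$ need not partition the peripheral vertices compatibly with the existing adhesion sets, so the clean-up step of ``absorbing small and low-degree pieces into adjacent bags'' has to be shown to stay within the adhesion budget, and this is where the choice of ``finest'' really interacts with the definition of $\alpha$-basic. (3)~The weak-to-strong upgrade is, as you say, the hard part, and you rightly flag the path of fat parallel-edge bundles as the obstruction; but you then assert without argument that the ``branching'' tangle you extracted supplies alternative routes around prospective branch vertices. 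Nothing in the sketch distinguishes the branching case from the degenerate path-of-bundles case at the routing stage, so the proposal does not actually close. As a table of contents the outline is sensible; as a proof, the mathematics begins exactly where it stops.
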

\noindent
Here, the~$\alpha$-basicness of a graph means, roughly speaking, that it has a path-like decomposition in which we can accommodate vertices of high degree.

When we restrict~$F$ to complete graphs in~\cref{thm:StrongImmersionStructureThm}, then there is also a qualitative converse of~\cref{thm:StrongImmersionStructureThm} in~\cite{dvorak2016structure}:
For every integer~$\alpha > 0$, there exists an integer~$n = n(\alpha) > 0$ such that any graph with a \tcd\ which has adhesion less than~$\alpha$ and whose torsos are~$\alpha$-basic does not contain~$K^n$ as a strong immersion minor.

If we consider strong immersions of walls, as it is the subject of this paper, and we thus look at~\cref{thm:StrongImmersionStructureThm} with~$F$ restricted to walls, then~\cref{thm:StrongImmersionStructureThm} does not have such a qualitative converse.
Indeed,
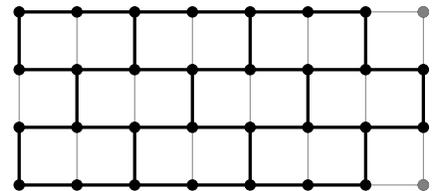
\begin{wrapfigure}{r}{.35\textwidth}
    \centering
    \vspace{-0.1cm}
    \resizebox{.35\textwidth}{!}{
        \begin{tikzpicture}[]
            % Set parameters
            \def\sizeofwalla{3}
            \def\sizeofwallb{3}
    
            % Draw the wall
            \foreach \i in {2,3,...,\sizeofwallb}{        
            	\draw[ultra thick] (0,\i-1)--(2*\sizeofwalla+1,\i-1);
            }
            
            \draw[ultra thick] (0,0)--(2*\sizeofwalla,0);
            \draw[ultra thick] (0,\sizeofwallb)--(2*\sizeofwalla,\sizeofwallb);
            
            \pgfmathsetmacro{\bh}{(\sizeofwallb-1)/2};
            \foreach \i in {0,1,...,\sizeofwalla}{
            	\foreach \j in {0,1,...,\bh}{
            		\draw[ultra thick] (2*\i,2*\j)--(2*\i,2*\j+1);
            	}
            }
        
            \foreach \i in {0,1,...,\sizeofwalla}{    
            	\foreach \j in {0}{
            		\draw[ultra thick] (2*\i+1,2*\j+1)--(2*\i+1,2*\j+2);
            	}
            }
    
            % Draw the underlying grid
            \draw[thin,gray] (2*\sizeofwalla,0)--(2*\sizeofwalla+1,0);
            \draw[thin,gray] (2*\sizeofwalla,3)--(2*\sizeofwalla+1,3);
    
            \foreach \i in {0,1,...,\sizeofwalla}{    
            	\foreach \j in {0}{
            		\draw[thin,gray] (2*\i,2*\j+1)--(2*\i,2*\j+2);
            	}
            }
    
            \foreach \i in {0,1,...,\sizeofwalla}{
            	\foreach \j in {0,...,\bh}{
            		\draw[thin,gray] (2*\i+1,2*\j)--(2*\i+1,2*\j+1);
            	}
            }
            
             % Draw the nodes
            \pgfmathsetmacro{\ah}{2*\sizeofwalla+1};
            \foreach \i in {0,...,\ah}{
                \foreach \j in {0,...,\sizeofwallb}{
                    \draw[fill=black] (\i,\j) circle (0.09);
                }
            }
            
            \draw[gray,fill=gray] (\ah,0) circle (0.09);
            \draw[gray,fill=gray] (\ah,3) circle (0.09);
        \end{tikzpicture}
    }
    \caption{The wall~$W_4$ of size~$4$ with the underlying~$4 \times 8$~grid.}
    \label{fig:wall}
\end{wrapfigure}
for every integer~$\ell > 0$, the~\emph{wall~$W_\ell$ of size~$\ell$} is formally defined as the graph arising from the~$\ell \times 2\ell$ grid by deleting all edges~$(i, j) (i', j')$ with~$j = j'$, $i = i'+1$ and~$j \equiv i~(mod\ 2)$ and then removing the two resulting vertices of degree~$1$ (see~\cref{fig:wall}).
By definition the wall~$W_\ell$ has~$2\ell^2 - 2$ vertices and maximum degree~$3$.
Thus, $W_\ell$ does not contain~$K^5$ as a strong immersion minor.
\cref{thm:StrongImmersionStructureThm} hence yields an integer~$\alpha > 0$ such that every wall, no matter how large its size, has a \tcd\ of adhesion less than~$\alpha$ such that each torso is~$\alpha$-basic.

In this paper, we prove a grid theorem-like result for the strong immersion relation which says that a graph contains a large wall as a strong immersion minor if and only if the graph has a {\tcd} of a specific type.
These \tcd s are inspired by those in~\cref{thm:StrongImmersionStructureThm} in that their torsos still admit a path-like structure, but path-likeness is defined differently, as follows.

A graph~$G$ is~\emph{\alphathin} for an integer~$\alpha > 0$ if there exists an enumeration~$v_1, \dots, v_n$ of its vertices such that there are at most~$\alpha$ edges in~$G$ joining~$\{v_1, \dots, v_{i-1}\}$ and~$\{v_{i+1}, \dots, v_n\}$ for~$i = 1, \dots, n$, and we call~$G$~\emph{\almostalphathin} if it becomes~\alphathin\ after removing up to~$\alpha$ vertices each of which has at most~$\alpha$ neighbours in~$G$.
In our specific \tcd s, the torsos then have~\almostalphathin\ `$3$-centres'.
Following~\cite{WollanImmersion}, the~\emph{$3$-centre} of a torso of a \tcd\ arises from the torso by a maximal sequence of deleting peripheral vertices of degree at most~$1$ and suppressing peripheral vertices of degree~$2$, removing arising loops.
We will show that if a torso of a \tcd\ is itself (almost)~\alphathin, then so is its~$3$-centre, while the converse does not hold.
For a further discussion of these notions, we refer the reader to~\cref{sec:Alphathin}.

Our first main result, a version of~\cref{thm:StrongImmersionStructureThm} specifically tailored to walls, then reads as follows:
\begin{mainresult} \label{main:NoWallImpliesAlphaThin}
    For every integer~$\ell > 0$, there exists an integer~$\alpha = \alpha(\ell) > 0$ such that if a graph~$G$ does not contain the wall~$W_\ell$ as a strong immersion minor, then~$G$ has a {\tcd} of adhesion at most~$\alpha$ such that the~$3$-centres of its torsos are~\almostalphathin.
\end{mainresult}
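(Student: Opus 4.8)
\emph{Strategy.}
I would deduce \cref{main:NoWallImpliesAlphaThin} from the Dvo{\v{r}}{\'a}k--Wollan structure theorem (\cref{thm:StrongImmersionStructureThm}) by refining the \tcd\ that it provides. Applying \cref{thm:StrongImmersionStructureThm} to the $W_\ell$-free graph~$G$ yields an integer~$\alpha_0 := \alpha(W_\ell)$ and a \tcd~$(T,\cX)$ of~$G$ of adhesion less than~$\alpha_0$ each of whose torsos is~$\alpha_0$-basic. The plan is to replace, for every node~$t$ of~$T$, the part~$X_t$ by a small auxiliary \tcd\ of the torso~$H_t$ whose own torsos already have almost-thin $3$-centres, and then to glue these local decompositions together along~$T$. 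The gluing is the routine part: if each local \tcd~$(T_t,\cX_t)$ of~$H_t$ is chosen so that every peripheral vertex of~$H_t$ forms a single-vertex leaf part of~$(T_t,\cX_t)$, then for every edge~$tt'\in T$ one deletes the two peripheral leaves of~$(T_t,\cX_t)$ and~$(T_{t'},\cX_{t'})$ that represent~$t'$ and~$t$ respectively and joins their neighbours; the result is a \tcd~$(T',\cX')$ of~$G$, and since the core vertices of~$H_t$ are exactly the vertices of~$X_t$, a direct check shows that every torso of~$(T',\cX')$ is isomorphic to a torso of one of the~$(T_t,\cX_t)$. Hence the adhesion of~$(T',\cX')$ and the $3$-centres of its torsos are governed entirely by the local decompositions together with the bound~$\alpha_0$ on the old adhesion sets.

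\emph{Descent to the torsos.}
For the refinement to be possible one first needs that being $W_\ell$-free passes, with bounded loss, from~$G$ to the torsos of~$(T,\cX)$: since~$(T,\cX)$ has adhesion less than~$\alpha_0$, no torso~$H_t$ admits a strong immersion of~$W_{\ell'}$ for a suitable~$\ell' = \ell'(\ell,\alpha_0)$. The content is a rerouting argument: a strong immersion of a large wall in~$H_t$ can be pulled back to one in~$G$ by replacing each peripheral vertex of~$H_t$ --- a contracted part~$Y$ of~$G$, entered by at most~$\alpha_0-1$ of the immersion paths --- by a genuine branch vertex inside~$Y$ and rerouting those paths through~$Y$ so as to preserve edge-disjointness. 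Making the rerouting work is the delicate point, and this is where the variant torso construction enters: passing to the~\torsoprime\ makes the contracted parts well-enough behaved to carry the argument through.

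\emph{The local lemma.}
The heart of the proof is the assertion that every~$\alpha$-basic graph~$H$ without a strong immersion of~$W_{\ell'}$ has a \tcd~$(T_H,\cX_H)$ of adhesion at most some~$\beta = \beta(\alpha,\ell')$ such that the $3$-centres of all its torsos are almost~$\beta$-thin, and moreover that~$(T_H,\cX_H)$ can be chosen so that each member of a prescribed family of vertices of~$H$ of degree less than~$\alpha$ is a single-vertex leaf part. To prove it, recall that an~$\alpha$-basic graph, once its at most~$\alpha$ high-degree ``apex'' vertices are deleted, carries a path-like decomposition into bags of bounded size; this backbone is genuinely~$O(\alpha)$-thin and hence harmless. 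The apex vertices are the sole obstruction to almost-thinness of a $3$-centre, and it is the exclusion of~$W_{\ell'}$ that tames them: the key structural claim is that for an apex~$w$, all but a bounded number (in terms of~$\ell'$) of the components of~$H-w$ to which~$w$ sends an edge are joined to~$H-w$ by at most one further edge, for otherwise the branch paths of a strong immersion of~$W_{\ell'}$ could be routed through~$w$ and the remaining, richly attached components --- just as a sufficiently large fan strongly immerses a large wall. Given this, I would put each apex into a part of its own, split every such ``pendant'' component of~$H-w$ off into its own single-vertex leaf part, and decompose the remainder of~$H$ along the backbone. In the $3$-centre of the torso at the apex's part the pendant components then become peripheral vertices of degree at most~$2$ and are suppressed, so the apex is left with bounded degree; every other torso either has bounded size or is a stretch of the backbone with a bounded amount of attached debris; in each case the $3$-centre is almost~$\beta$-thin.

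\emph{Main obstacle.}
The crux is the local lemma, specifically the quantitative structural claim about apex vertices: one must turn the informal ``otherwise route a large wall'' into a genuine argument that simultaneously handles up to~$\alpha$ apices, the bounded-width path-like backbone, and the bounded exceptional set of non-pendant components at each apex, with all parameters bounded in terms of~$\alpha$ and~$\ell'$. The descent of $W_\ell$-freeness to the torsos, with its accompanying~\torsoprime\ bookkeeping, is a second and smaller obstacle; the gluing, and the final reading-off of~$\alpha = \alpha(\ell)$ from~$\alpha_0$, $\ell'$ and~$\beta$, are routine.
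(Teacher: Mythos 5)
Your plan departs genuinely from the paper's: you would apply the Dvo\v{r}\'ak--Wollan structure theorem (\cref{thm:StrongImmersionStructureThm}) and refine the resulting decomposition torso by torso, whereas the paper deliberately uses \emph{no} prior structure theorem for immersions, but builds its \tcd\ from scratch --- reducing to $3$-edge-connected graphs, contracting the components of an auxiliary graph of highly parallel adjacencies, and combining the classical Grid Theorem with a neighbourhood bound for $3$-edge-connected $W_\ell$-free graphs and an excluded-star lemma to control the treewidth, the maximum degree, and finally the thinness of the torsos. The introduction even flags this choice.

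The fatal gap in your plan is the descent step, and it is not the ``smaller obstacle'' you make it out to be: it is simply false that the torsos of a bounded-adhesion \tcd\ of a $W_\ell$-free graph exclude $W_{\ell'}$ for some $\ell'$ depending only on $\ell$ and the adhesion bound. Torsos are produced by \emph{contraction}, and contraction can create strong immersions out of nothing when a contracted branch is poorly edge-connected. Take $G = K_{1,3n}$, which for any $n$ contains no $W_2$ (it has only one vertex of degree $\geq 2$). Let $T$ be a star, put the centre of $G$ alone in the hub part, and distribute the $3n$ leaves of $G$ into $n$ triples, one per leaf of $T$. This \tcd\ has adhesion $3$, yet the torso at the hub is exactly $S_{3,n}$, which by \cref{lem:SpiderContainsAll} contains $W_\ell$ as a strong immersion minor as soon as $n \geq 2\ell^2$. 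So no function $\ell'(\ell, \alpha_0)$ makes your descent claim true. The \torsoprime\ construction you gesture at would have to split each peripheral vertex into one vertex per component (or even per $3$-edge-connected block) of the contracted branch to kill this example; but then one must re-prove that the Dvo\v{r}\'ak--Wollan decomposition can be taken with that extra structure while keeping its torsos $\alpha_0$-basic and its adhesion bounded --- and that is not what \cref{thm:StrongImmersionStructureThm} supplies. Your local lemma also has a genuine hole at its quantitative heart (bounding the number of non-pendant components of $H-w$ attached to an apex $w$, simultaneously for all apices and compatibly with the backbone), though that at least has a plausible route through the $P_n * v$ argument of \cref{lem:PathWithApexContainsWall}. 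The descent is where the programme actually breaks, and it is precisely the difficulty the paper's self-contained construction is designed to avoid.
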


\noindent We will see that~$3$-centres are indeed necessary in~\cref{main:NoWallImpliesAlphaThin} in that we cannot consider the torsos themselves instead of their~$3$-centres (see~\cref{ex:3CentresAreNecessary}).

In contrast to~\cref{thm:StrongImmersionStructureThm}, the notion of path-like torsos as in~\cref{main:NoWallImpliesAlphaThin} now gives the desired equivalence of the existence of strong immersions of large walls and these {\tcd}s.
More precisely, our second main result provides a qualitative converse of~\cref{main:NoWallImpliesAlphaThin}:
\begin{mainresult} \label{main:AlphaThinImpliesNoWall}
    For every integer~$\alpha > 0$, there exists an integer~$\ell = \ell(\alpha) > 0$ such that if a graph~$G$ has a {\tcd} of adhesion at most~$\alpha$ such that the~$3$-centres of its torsos are \almostalphathin, then~$G$ does not contain the wall~$W_\ell$ as a strong immersion minor.
\end{mainresult}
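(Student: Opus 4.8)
The strategy is contrapositive: assume $G$ contains $W_\ell$ as a strong immersion minor for $\ell$ large (to be chosen as a function of $\alpha$), and deduce that no {\tcd} of $G$ can simultaneously have adhesion at most $\alpha$ and $3$-centres of its torsos almost~$\alpha$-thin. So fix such a {\tcd} $(T,\cX)$ of adhesion $\le\alpha$ and a strong immersion $\phi$ of $W_\ell$ in $G$; the goal is to locate a single torso whose $3$-centre cannot be almost~$\alpha$-thin, yielding a contradiction once $\ell$ is large enough. The key structural fact is that a strong immersion is in particular a family of edge-disjoint paths, so along any edge $t_1t_2$ of $T$ the adhesion set $E_G(\bigcup_{T_1}X_t,\bigcup_{T_2}X_t)$ is a small edge cut of $G$ through which every one of the $\phi$-paths must cross an even number of times — thus at most $\lfloor\alpha/2\rfloor$ $\phi$-paths actually traverse each adhesion set. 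Hence the edge cut separating $T$ at $t_1t_2$ interacts with only a bounded number of branch-vertex-to-branch-vertex paths of the immersed wall.

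The main step is a routing/pigeonhole argument to confine the wall, or a large subwall of it, into one part of the decomposition. Since $W_\ell$ is $3$-edge-connected (it contains $W_{\ell'}$ for all $\ell'\le\ell$ as a strong immersion via its subwalls), and since each adhesion set has size $\le\alpha$, any orientation of $T$ that follows ``where most of the wall lives'' is well-defined: for each edge $t_1t_2\in T$ at most $\alpha$ edges of $G$ cross, so at most $\alpha$ of the $O(\ell^2)$ wall-edges cross, and hence — for $\ell$ large relative to $\alpha$ — strictly more than half of the branch vertices of $\phi(W_\ell)$ lie on one side; orient $t_1t_2$ towards that side. As in the standard argument for decompositions, these orientations point towards a single node $t^\ast$ (or a single edge, handled symmetrically), and after deleting from $W_\ell$ the bounded number of wall-edges that meet adhesion sets incident to $t^\ast$, a large subwall $W_{\ell'}$ with $\ell'\ge\ell'(\ell,\alpha)\to\infty$ has all its branch vertices in $X_{t^\ast}$ and all its routing paths confined to $G[\bigcup_{T'}X_t\cup X_{t^\ast}]$ for the component $T'$ meeting $t^\ast$, so that after contracting each such component to its peripheral vertex, $W_{\ell'}$ becomes a strong immersion in the torso $H_{t^\ast}$ with all branch vertices being core vertices. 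Passing from $H_{t^\ast}$ to its $3$-centre $H_{t^\ast}^{(3)}$ only suppresses degree-$\le 2$ peripheral vertices, which preserves edge-disjoint $A$--$A$ routings among core vertices, so $W_{\ell'}$ is strongly immersed in $H_{t^\ast}^{(3)}$.

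It remains to show that $W_{\ell'}$ being strongly immersed forces $H_{t^\ast}^{(3)}$ to fail to be almost~$\alpha$-thin once $\ell'$ is large. Here I would prove a self-contained lemma: if a graph $J$ is almost~$\alpha$-thin, then $J$ does not contain $W_{\ell'}$ as a strong immersion minor for $\ell'>f(\alpha)$. This is the crux and the expected main obstacle. The proof idea: an $\alpha$-thin enumeration $v_1,\dots,v_n$ of $J$ (after deleting the at most $\alpha$ low-degree exceptional vertices, which a large subwall can avoid, losing only boundedly many branch vertices) gives for every $i$ an edge cut $(\{v_1,\dots,v_{i-1}\},\{v_{i+1},\dots,v_n\})$ of size $\le\alpha$. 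A strongly immersed large wall has large ``edge-connectivity spread'': pick a threshold index $i$ so that both sides contain many branch vertices; then the $\phi$-paths connecting the two sides all use the $\le\alpha$ cut edges, and because a wall of size $m$ cannot be split by an edge cut of size $\alpha$ into two parts each containing more than, say, $\alpha$ of the columns without violating that each column is linked to its neighbours by $3$ edge-disjoint paths (a menger-type count in $W_{\ell'}$ itself), we get $m\le g(\alpha)$, a contradiction for $\ell'$ large. In other words, large walls have the property that every edge cut of bounded size leaves almost all branch vertices on one side, which directly contradicts the existence of a ``balanced'' thin threshold. Tracking constants: choosing $\ell=\ell(\alpha)$ larger than the composition of the function losing wall-size when confining to $t^\ast$ with the function $f(\alpha)$ from the thinness lemma closes the argument. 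The delicate point throughout is bookkeeping how many branch vertices and wall-edges are lost at each reduction (avoiding exceptional vertices, deleting cut-crossing edges, suppressing peripheral vertices), ensuring each loss is bounded by a function of $\alpha$ alone and never of $\ell$.
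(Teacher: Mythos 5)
Your overall plan shares the paper's skeleton — orient the tree so that each edge points toward the side holding the majority of a well-linked set, find the unique sink, and then argue that the torso at the sink cannot have an almost~\alphathin\ $3$-centre because the corresponding thin enumeration would produce a small edge cut that the well-linkedness of the wall contradicts. But two of your intermediate claims do not hold, and the second one is a genuine gap rather than a detail.

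First, the claim in your ``main step'' that after orienting one finds a node~$t^\ast$ such that a large subwall~$W_{\ell'}$ has \emph{all its branch vertices in~$X_{t^\ast}$} is false. Bounded adhesion says nothing about where branch vertices sit: the sink~$t^\ast$ can have arbitrarily many neighbours~$t$, and each~$Y_t$ can contain a handful of branch vertices (even wall vertices that are adjacent to each other within the wall, so that no wall-edge needs to cross the corresponding adhesion set). In the extreme,~$X_{t^\ast}$ may be empty while the branch vertices are distributed one-per-leaf over a star~$T$. Your fix --- ``deleting the bounded number of wall-edges that meet adhesion sets incident to~$t^\ast$'' --- also fails, because the number of adhesion sets incident to~$t^\ast$ is the degree of~$t^\ast$ in~$T$, which is not bounded by any function of~$\alpha$. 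Consequently the strong immersion you transport into the torso need not have its branch vertices among the core vertices~$X_{t^\ast}$; in the extreme case they are \emph{all} peripheral. Second, and related, the passage to the torso does not in general produce a \emph{strong} immersion: if a~$Y_t$ contains a branch vertex and some other immersion path passes through~$Y_t$, that path will, after the contraction, pass through the peripheral vertex~$v_t$, an internal branch vertex --- which is exactly what strong immersions forbid.

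The published proof sidesteps both problems by never trying to confine a subwall into one part or into a torso. It instead takes a well-linked set~$Z$ of size~$\ell-2$ in~$W_\ell$ whose members are pairwise joined by three internally vertex-disjoint paths (\cref{lem:WellLinkedInWall}), passes to the corresponding set~$Z_U$ of branch vertices, and tracks its image~$Z_H$ in the sink torso, which is allowed to consist mostly (or even entirely) of \emph{peripheral} vertices. Bounded adhesion only gives~$|Z_U \cap Y_t|\le\alpha$ per neighbour, so~$|Z_H| \ge |Z_U\cap X_s| + |Z_U\setminus X_s|/\alpha$, which forces~$Z_H$ to contain many core \emph{or} many peripheral vertices; the two cases are then treated separately against an~\almostalphathin\ enumeration of the $3$-centre, using property~\cref{equ:PropStar} (that the witnessing edge-disjoint path family meets each vertex of~$U$ at most once) to control how many paths pass through the endpoints of the chosen interval, and using the adhesion bound to control how many paths pass through peripheral endpoints. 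Your ``self-contained lemma'' (almost~\alphathin\ graphs contain no large wall strong-immersion) is plausible and would suffice \emph{if} you could force all branch vertices into the core, but as explained you cannot; so the reduction to that lemma is where the argument breaks.
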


While the proof of~\cref{main:AlphaThinImpliesNoWall} is self-contained and does not build on any previous results, the proof of~\cref{main:NoWallImpliesAlphaThin} draws on the grid theorem for excluded wall minors as well as on several previously established methods for immersions and {\tcd}s~\cites{dvorak2016structure,WollanImmersion,GMV}.
We emphasise that we do not make use of any previous structure theorem for weak or strong immersions, but rather use some of their ideas and combine them in a novel way to fit our specific problem. \\

This paper is organised as follows.
We first discuss the notions of~\almostalphathin\ graphs and~$3$-centres in~\cref{sec:Alphathin}.
Then we prove~\cref{main:AlphaThinImpliesNoWall} in~\cref{sec:WallsAsObstructions} and~\cref{main:NoWallImpliesAlphaThin} in~\cref{sec:ExcludingWalls}. \\

For basic graph-theoretic terms, we follow~\cite{DiestelBook16}.
In this paper, we only consider graphs without loops.
But unless called \emph{simple}, they may contain parallel edges.
As a consequence, the \emph{degree} of a vertex is the number of its incident edges which may in general differ from the number of its neighbours.
Moreover, we diverge from~\cite{DiestelBook16} in that we define the \emph{components} of a graph not as its maximal connected subgraphs, but as their vertex sets.

\section{Almost \texorpdfstring{$\alpha$}{alpha}-thin graphs and~\texorpdfstring{$3$}{3}-centres} \label{sec:Alphathin}

In this section we take a closer look into our notion of path-likeness, (almost)~\alphathin\ graphs, and its relation to the concept of~$3$-centres.
Let us first recall the definition of (almost)~\alphathin ness from the introduction.
\begin{definition}
    Let~$\alpha > 0$ be an integer.
    A graph~$G$ is~\emph{\alphathin} if there exists an enumeration~$v_1, \dots, v_n$ of its vertices satisfying that~$|E_G(\{v_1, \dots, v_{i-1}\}, \{v_{i+1}, \dots, v_n\})| \le \alpha$ for~$i = 1, \dots, n$,\footnote{Note that the definition of~$\alpha$-thin differs from the similar notion of \emph{cutwidth} at most~$\alpha$, since cutwidth also takes the edges joining~$v_i$ and~$\{v_{i+1}, \dots, v_n\}$ into account. While cutwidth bounds thinness from above, there are graphs with unbounded cutwidth which are~$0$-thin, as paths with many parallel edges witness.} and the graph~$G$ is~\emph{\almostalphathin} if there exists a set~$X$ of at most~$\alpha$ vertices of~$G$ such that each vertex in~$X$ has at most~$\alpha$ neighbours in~$G$ and~$G - X$ is~\alphathin.
\end{definition}
\noindent We remark that if a graph is (almost)~\alphathin, then so are all its subgraphs.

Even though adding vertices of degree~$1$ to a graph does not change whether it contains a wall as a strong immersion minor, such vertices affect whether a graph is (almost) {\alphathin}.
\begin{example2} \label{ex:StarVerticesDeg1}
    For every integer~$\alpha > 0$, the star~$K_{1,n}$ with~$n = 3 \alpha + 3$ leaves is not~\almostalphathin, but becomes~$0$-thin after repeatedly removing vertices of degree~$1$.
\end{example2}
\begin{proof}
    Repeatedly removing its vertices of degree at most~$1$, the star~$G := K_{1, n}$ becomes the graph on one vertex which is trivially~$0$-thin.
    So it remains to argue that~$G$ is not~\almostalphathin, and we suppose for a contradiction that there exists a set~$X$ of at most~$\alpha$ vertices of~$G$, each with at most~$\alpha$ neighbours in~$G$, such that~$G - X$ is~\alphathin.
    
    The set~$X$ cannot contain the centre~$c$ of~$G$ since it has~$n > \alpha$ many neighbours in~$G$.
    Thus, $X$ consists of up to~$\alpha$ leaves of~$G$, and~$G - X$ is again a star with centre~$c$, now with at least~$n' \ge 2 \alpha + 3$ leaves.
    Now take an arbitrary enumeration~$v_1, \dots, v_{n'+1}$ of~$V(G - X)$, and let~$i \in \{1, \dots, n'+1\}$ with~$v_i = c$.
    If~$i \ge \alpha + 3$, then~$\{v_1, \dots, v_{i-2}\}$ contains at least~$\alpha + 1$ leaves of the star, while if~$i \le (n' + 1) - (\alpha + 3)$, then~$\{v_{i+2}, \dots, v_{n'+1}\}$ contains at least~$\alpha + 1$ leaves.
    So for~$j = i-1$ in the first case and~$j = i+1$ in the second case, there are at least~$\alpha + 1$  edges joining~$\{v_1, \dots, v_{j-1}\}$ and~$\{v_{j+1}, \dots, v_{n'+1}\}$, which yields the desired contradiction.
\end{proof}

Similar to the addition of vertices of degree~$1$, subdividing edges has an impact on (almost)~\alphathin ness.
\begin{example2} \label{ex:ParallelVerticesDeg2}
    For every integer~$\alpha > 0$, the graph~$G$ that arises from two disjoint stars, each with~$n = 3 \alpha + 3$ leaves, by identifying their leaves is not~\almostalphathin, but becomes~$0$-thin after repeatedly suppressing vertices of degree~$2$.
\end{example2}
\begin{proof}
    If we repeatedly suppress vertices of degree~$2$, then~$G$ turns into the graph consisting of two vertices which are joined by~$n$ edges.
    This graph is trivially~$0$-thin.
    Since~$G$ contains~$K_{1, n}$ as a subgraph and~$K_{1,n}$ is not~\almostalphathin\ by~\cref{ex:StarVerticesDeg1}, $G$ cannot be \almostalphathin\ itself.
\end{proof}

Just as a graph stays (almost)~\alphathin\ when we delete some of its vertices, suppressing vertices of degree~$2$ also does not impact the (almost)~\alphathin ness of a graph, as the next lemma demonstrates:
\begin{lemma} \label{lem:AlphaThinPrimeSubdividingSuppressing}
    Let~$G$ be an (almost)~\alphathin\ graph for some integer~$\alpha > 0$.
    Then suppressing a vertex of~$G$ of degree~$2$ (and deleting a potentially arising loop) results in an (almost)~\alphathin\ graph.
\end{lemma}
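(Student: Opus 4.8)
The plan is to prove the statement by manipulating the enumeration witnessing thinness. First I would handle the core claim that suppressing a degree-$2$ vertex of an $\alpha$-thin graph yields an $\alpha$-thin graph; the almost case will follow easily afterwards. So let $G$ be $\alpha$-thin, let $w$ be a vertex of degree $2$, and let $H$ be the graph obtained from $G$ by suppressing $w$ (deleting the resulting loop if the two neighbours of $w$ coincide, i.e.\ if $w$ lies on a pair of parallel edges or its two incident edges go to the same vertex). Fix an enumeration $v_1, \dots, v_n$ of $V(G)$ witnessing $\alpha$-thinness. The natural idea is simply to delete $w$ from this enumeration, keeping the relative order of the remaining vertices, and to check that the resulting enumeration of $V(H)$ still has all cuts of size at most $\alpha$.

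The key point in the verification is a local comparison of adhesion sets in $G$ and in $H$. For each $i$, the cut $E_G(\{v_1, \dots, v_{i-1}\}, \{v_{i+1}, \dots, v_n\})$ is a set of edges of $G$; passing to $H$ replaces the (at most two) edges incident with $w$ by the single edge $e$ created by the suppression (or by nothing, in the loop case). I would argue that for every position in the new enumeration, the corresponding cut in $H$ is obtained from a cut in $G$ at a suitably chosen position by a map that does not increase cardinality. Concretely: if $w = v_k$, then for a vertex $v_j$ with $j < k$ the set $\{v_1,\dots,v_{j-1}\}$ and its complement-above are unchanged as vertex sets when we remove $w=v_k$ from the enumeration, since $w$ lies in the ``above'' part $\{v_{j+1},\dots,v_n\}$; so the $H$-cut at $v_j$ is the image of the $G$-cut at $v_j$ under the edge-identification, hence has size at most $|E_G(\dots)| \le \alpha$. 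Symmetrically for $j > k$. The only genuinely new positions to examine are the cut that sits ``where $w$ used to be'', i.e.\ the cut of $H$ between $\{v_1,\dots,v_{k-1}\}$ and $\{v_{k+1},\dots,v_n\}$; here one compares with the two $G$-cuts flanking $v_k$, namely $E_G(\{v_1,\dots,v_{k-1}\},\{v_k,v_{k+1},\dots,v_n\})$ and $E_G(\{v_1,\dots,v_{k-1},v_k\},\{v_{k+1},\dots,v_n\})$, and observes that every edge of the $H$-cut either already appears in one of these (if it is not $e$) or, if it is the new edge $e$, corresponds to an edge of $G$ incident with $w$ lying in whichever of these two $G$-cuts is appropriate. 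The main obstacle, and the step requiring the most care, is precisely this bookkeeping when the two neighbours $a,b$ of $w$ straddle $w$ in the enumeration versus lie on the same side, together with the degenerate cases where $a = b$ (parallel edges, loop deleted) or where an incident edge is itself parallel to an existing $a$--$b$ edge: in all these situations the identification map on edge sets is still size-non-increasing, so the bound survives, but the case distinction must be stated cleanly.

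For the almost~\alphathin\ case, let $X$ be the exceptional set with $|X| \le \alpha$, each vertex of $X$ having at most $\alpha$ neighbours in $G$, and with $G - X$ being $\alpha$-thin. If the suppressed vertex $w$ lies in $X$, then suppressing $w$ in $G$ affects $G - X$ only by possibly adding one edge between two vertices of $G-X$ (when both neighbours of $w$ lie outside $X$) or by changes entirely confined to how $X$ attaches to the rest; in the first sub-case one still has to check that $(G \text{ suppressed}) - (X \setminus \{w\})$ is $\alpha$-thin, which follows from the degree-$2$-suppression argument above applied to $G - (X\setminus\{w\})$ (noting $w$ still has degree at most $2$ there), and then $X \setminus \{w\}$ is a valid exceptional set of size at most $\alpha - 1$. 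If $w \notin X$, then $w$ has degree at most $2$ in $G - X$ as well (deleting vertices only drops degree), so suppressing $w$ in $G - X$ yields an $\alpha$-thin graph by the core claim, the set $X$ still consists of at most $\alpha$ vertices with at most $\alpha$ neighbours each in the suppressed graph (deleting/suppressing $w$ does not increase any degree), and this exhibits the suppressed graph as almost~\alphathin. I would organise the write-up so that the $\alpha$-thin case is proved in full as the substantive content, and the almost case is dispatched in a short paragraph invoking it.

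Finally, I should remark that the statement ``(and deleting a potentially arising loop)'' is harmless: since we work with loopless graphs throughout, a loop created by suppression is simply discarded, and discarding edges can only decrease cut sizes, so it never threatens the $\alpha$-thin bound. The hardest part, to reiterate, is not any inequality but the clean handling of the edge-identification map across all positional and degenerate cases; once that map is seen to be size-non-increasing cut-by-cut, the lemma is immediate.
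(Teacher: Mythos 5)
Your core argument for the plain $\alpha$\thin\ case is essentially the paper's: fix a witnessing enumeration, delete the suppressed vertex from it, and compare cuts in the new graph with cuts in the old one. The paper streamlines this by first noting that the suppressed vertex may be assumed to lie between its two neighbours in the enumeration (moving it there can only shrink cuts), after which each $H$-cut is obtained from a single $G$-cut by replacing one $w$-edge with the new edge; you skip the reordering and argue position by position, which also works but invites detours. Two such detours deserve a comment.

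First, the paragraph about ``the cut that sits where $w$ used to be'' concerns $E_H\bigl(\{v_1,\dots,v_{k-1}\},\{v_{k+1},\dots,v_n\}\bigr)$, which in the shortened enumeration is a \emph{gap} cut between two consecutive positions, not the cut $E_H\bigl(\{v'_1,\dots,v'_{i-1}\},\{v'_{i+1},\dots,v'_{n-1}\}\bigr)$ at some vertex $v'_i$. The definition of \alphathin{}ness only constrains the latter, and the two ``flanking $G$-cuts'' $E_G\bigl(\{v_1,\dots,v_{k-1}\},\{v_k,\dots,v_n\}\bigr)$ and $E_G\bigl(\{v_1,\dots,v_k\},\{v_{k+1},\dots,v_n\}\bigr)$ to which you compare it are likewise gap cuts of $G$ and are \emph{not} bounded by $\alpha$ (they can exceed the vertex cuts by up to $d_G(v_k)$). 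Fortunately this whole paragraph is superfluous: every vertex position of the shortened enumeration is one of your ``$j<k$'' or ``$j>k$'' positions, and your injection of $H$-cuts into the corresponding $G$-cuts at those positions already covers everything.

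Second, and more seriously, your treatment of the almost case when $w\in X$ and both neighbours $a,b$ lie outside $X$ has a genuine gap. You apply the core claim to $G-(X\setminus\{w\})$, but that graph is not known to be \alphathin: the hypothesis only gives that $G-X$ is \alphathin, and $G-(X\setminus\{w\})$ is $G-X$ with $w$ and its two edges put back in. Inserting a degree-$2$ vertex into a witnessing enumeration places a \emph{gap} cut of $G-X$ at the new vertex's position, and, as noted above, gap cuts are not controlled by the \alphathin\ hypothesis. So this intermediate claim needs its own argument and does not ``follow from the degree-$2$-suppression argument above.'' (The paper, for its part, asserts only that the almost case ``follows immediately'' and gives no details, so it does not clearly address this subcase either; but your explicit derivation introduces a step that is not justified as written.)
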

\begin{proof}
    It is enough to prove the lemma for~\alphathin ness, since the case of~\almostalphathin ness then follows immediately.
    So let~$G$ be an~\alphathin\ graph, let~$v \in G$ be a vertex of degree~$2$ and let~$u$ and~$w$ be the other incident vertices of the two edges of~$G$ incident to~$v$.
    Now let~$G'$ arise from~$G$ by suppressing~$v$, omitting a potentially arising loop.
    
    If~$u = w$, then~$G'$ is a subgraph of~$G$ which is hence~\alphathin; so we may assume that~$u$ and~$w$ are distinct.
    Given an enumeration~$v_1, \dots, v_{n+1}$ of~$V(G)$ witnessing that~$G$ is~\alphathin, let~$k \in \{1, \dots, n+1\}$ with~$v_k = v$.
    We may assume that~$v$ appears in this enumeration in between~$u$ and~$w$; otherwise, moving~$v$ in between them only reduces the number of edges joining~$\{v_1, \dots, v_{i-1}\}$ and~$\{v_{i+1}, \dots, v_{n+1}\}$ for~$i = 1, \dots, n+1$.
    We then define the enumeration~$v_1', \dots, v_{n}'$ of~$V(G')$ as follows:
    \begin{equation*}
        v_i' :=
        \begin{cases}
            v_i & \text{if } i < k;\\
            v_{i+1} & \text{if } i > k.
        \end{cases}
    \end{equation*}
    This definition guarantees that for~$i = 1, \dots, n$, the edges of~$G'$ joining~$\{v_1', \dots, v_{i-1}'\}$ and~$\{v_{i+1}', \dots, v_{n}'\}$ coincide with the edges of~$G$ joining~$\{v_1, \dots, v_{j-1}\}$ and~$\{v_{j+1}, \dots, v_{n+1}\}$, where~$j := i$ if~$i<k$ and~$j:= i+1$ if~$i \geq k$, except that the edge of~$G'$ that arose by suppressing~$v$ is replaced by one of the two edges incident to~$v$ in~$G$.
    Thus, $G'$ is~\alphathin\ as witnessed by the enumeration~$v_1', \dots, v_{n}'$ of~$V(G')$.
\end{proof}

As we will see below, we need to ignore peripheral vertices of degree at most~$2$ in torsos to establish~\cref{main:NoWallImpliesAlphaThin} (see~\cref{ex:3CentresAreNecessary}).
Formally, we achieve this with the notion of~$3$-centres which was introduced in~\cite{WollanImmersion} and is well-defined by~\cite{WollanImmersion}*{Lemma 9}.
\begin{definition}
    Let~$X$ be a set of vertices of a graph~$G$.
    The~\emph{$3$-centre of~$(G, X)$} arises from~$G$ by a maximal sequence of deleting vertices of degree at most~$1$ and suppressing vertices of degree~$2$ (omitting any resulting loops) where all these vertices are not in~$X$.
    If~$(T, \cX)$ is a {\tcd} of~$G$, then the~\emph{$3$-centre of the torso~$H_t$ of~$(T, \cX)$ at the node~$t \in T$} is defined as the~$3$-centre of~$(H_t, X_t)$.
\end{definition}
 
We note that the~$3$-centre of a pair~$(G, X)$ cannot be formed by first repeatedly deleting vertices of degree~$1$ and then suppressing all vertices of degree~$2$.
Indeed, the deletion of resulting loops may create new vertices of degree~$1$ which have to be deleted afterwards; consider for example the graph~$G$ which consists of two triangles joined by an edge and let~$X$ be the set of vertices of one of the triangles.

If a graph~$G$ is (almost)~\alphathin, then the~$3$-centre of~$(G, X)$ is also (almost)~\alphathin\ since deleting vertices and suppressing vertices of degree~$2$ does not affect the (almost)~\alphathin ness of a graph, see~\cref{lem:AlphaThinPrimeSubdividingSuppressing}.
In particular, it is weaker to assume in~\cref{main:AlphaThinImpliesNoWall} that the~$3$-centres of the torsos are almost~\alphathin\ than to assume that the torsos themselves have this property.

As we have seen in~\cref{ex:StarVerticesDeg1} and~\cref{ex:ParallelVerticesDeg2}, the converse statement is not true:
If the~$3$-centre of~$(G, X)$ is~\alphathin, then~$G$ does not even have to be~\almostalphathin\ itself.
In fact, the converse holds in an even stronger form in that we cannot remove~$3$-centres from~\cref{main:NoWallImpliesAlphaThin} and consider the torsos themselves instead, as the following example demonstrates.

\begin{example2}\label{ex:3CentresAreNecessary}
    For every two integers~$\ell \ge 2$ and~$\alpha > 0$, there exists a graph which does not contain the wall~$W_\ell$ as a strong immersion minor and also does not admit a {\tcd} of adhesion at most~$\alpha$ whose torsos are {\almostalphathin}.
\end{example2}
\begin{proof}
    Let~$G = K_{1, n}$ be the star with~$n = \alpha (3 \alpha + 3)$ leaves and centre~$c \in V(G)$.
    Clearly, $G$ does not contain~$W_\ell$ as a strong immersion minor since~$G$ contains only a single vertex of degree more than~$1$ while~$W_\ell$ has several of those because of~$\ell \ge 2$.
    Thus, it remains to consider an arbitrary {\tcd}~$(T, \cX)$ of~$G$ of adhesion at most~$\alpha$ and to show that at least one of the torsos of~$(T, \cX)$ is not {\almostalphathin}.

    Let~$s$ be the (unique) node of~$T$ such that~$X_s$ contains the centre~$c$ of the star~$G$.
    For each neighbour~$t$ of~$s$ in~$T$, let~$Y_t := \bigcup_{t' \in T'} X_{t'}$ where~$T'$ is the component of~$T - st$ containing~$t$.
    Since~$(T,\cX)$ has adhesion at most~$\alpha$ and~$c \in X_s$, the set~$Y_t$ contains at most~$\alpha$ leaves of~$G$.
    But~$G$ has~$\alpha (3 \alpha + 3)$ leaves, so it yields a star~$G'$ with at least~$3 \alpha + 3$ leaves in the torso~$H_s$ of~$(T, \cX)$ at~$s$.
    This star~$G'$, however, is not {\almostalphathin} by~\cref{ex:StarVerticesDeg1}, and hence~$H_s$ is not {\almostalphathin} either.
\end{proof}
\noindent If we adapt~\cref{ex:3CentresAreNecessary} by doubling each edge in~$G$, then this modified version also demonstrates that just deleting peripheral vertices of degree~$1$ in each torso instead of taking its~$3$-centre does not suffice either.

\section{Strongly immersed walls are obstructions} \label{sec:WallsAsObstructions}

In this section, we prove~\cref{main:AlphaThinImpliesNoWall} which we recall here:
\begin{customthm}{\cref{main:AlphaThinImpliesNoWall}}
    For every integer~$\alpha > 0$, there exists an integer~$\ell = \ell(\alpha) > 0$ such that if a graph~$G$ has a {\tcd} of adhesion at most~$\alpha$ such that all the~$3$-centres of its torsos are \almostalphathin, then~$G$ does not contain the wall~$W_\ell$ as a strong immersion minor.
\end{customthm}

As a key ingredient to our proof of~\cref{main:AlphaThinImpliesNoWall}, we need~\cref{lem:WellLinkedInWall}, which asserts that the wall~$W_\ell$ contains a vertex set~$Z$ of size~$\ell - 2$ which is~\emph{well-linked} in~$W_\ell$, that is, for every two disjoint~$A, B \subseteq Z$ with~$k := |A| = |B|$, there exist~$k$ vertex-disjoint~$A$--$B$ paths in~$W_\ell$.
The existence of such a set~$Z$ of size~$\ell$ follows from the facts that the tree-width of a wall of size~$\ell$ is at least~$\ell$ and the largest size of a well-linked set in a graph is lower bounded by its tree-width (see for example the survey~\cite{harvey2017parameters}).
We include a direct and constructive proof in the case of walls here which yields a further structural property of the well-linked set.

\begin{lemma}\label{lem:WellLinkedInWall}
    The wall~$W_{\ell}$ of size~$\ell \ge 3$ contains a set~$Z$ of vertices of size~$\ell-2$ such that~$Z$ is well-linked in~$W_{\ell}$ and such that every two vertices in~$Z$ are joined by three internally vertex-disjoint paths in~$W_{\ell}$.
\end{lemma}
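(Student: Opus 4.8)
The plan is to exhibit an explicit set $Z$ of $\ell-2$ vertices lying in a single row of the wall $W_\ell$ and to verify both required properties by hand. Recall that $W_\ell$ is built from the $\ell \times 2\ell$ grid, so it has $\ell$ horizontal paths (rows), each of length roughly $2\ell$, linked by short vertical edges in a brick-wall pattern. I would take $Z$ to consist of $\ell-2$ vertices spread out along, say, the middle row, chosen so that consecutive elements of $Z$ are at horizontal distance at least $2$ (so that distinct bricks separate them) and so that $Z$ avoids the two rows at the very top and bottom as well as the leftmost and rightmost columns. Concretely, one can take in row $r := \lceil \ell/2 \rceil$ the vertices in columns $2, 4, 6, \dots, 2(\ell-2)$, which stay well inside the $2\ell$ columns.

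The second property — that any two vertices $z, z' \in Z$ are joined by three internally vertex-disjoint paths in $W_\ell$ — is the easier one, and I would dispatch it first. Since $W_\ell$ is $3$-edge-connected and essentially $3$-regular away from the boundary, and since both $z$ and $z'$ have degree $3$ (they are interior vertices of a row, hence incident to two horizontal edges and one vertical edge), three internally disjoint $z$–$z'$ paths exist by Menger's theorem provided the local cut at each endpoint is not an obstruction; concretely I would just route them: one path along the common row between $z$ and $z'$, one path dropping from $z$ to an adjacent row, travelling horizontally, and climbing back up to $z'$, and a third path doing the same in the other vertical direction (using that $z$ and $z'$ each have a vertical edge, and that the brick pattern guarantees enough vertical edges in the neighbouring rows to reconnect). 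A short case distinction handles $z$ and $z'$ whose vertical edges point the same way versus opposite ways; this is routine.

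For well-linkedness I must show that for disjoint $A, B \subseteq Z$ with $|A| = |B| = k$ there are $k$ vertex-disjoint $A$–$B$ paths. By Menger this is equivalent to: $W_\ell$ has no $A$–$B$ separator of size $< k$. The natural approach is to use the grid structure directly. Since all of $Z$ lies in a single row, an $A$–$B$ linkage can be built by first routing each vertex of $A \cup B$ vertically to a private ``track'' — a horizontal row reserved for it — using the $\ell - 2 \ge k$ available rows other than the one containing $Z$; since $|A \cup B| \le \ell - 2$ and the wall has $\ell$ rows, there are enough disjoint rows. Within its own row each path can travel horizontally without collision, and then one pairs up $A$ with $B$ arbitrarily and connects the corresponding tracks at the far left (or using a fixed column) by vertical segments, again using disjoint columns. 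The spacing condition (consecutive $Z$-vertices two columns apart) ensures the initial vertical descents from distinct $Z$-vertices use distinct columns and hence do not interfere. Making this routing precise — specifying the columns and rows used and checking disjointness — is the main technical work, but it is elementary grid bookkeeping rather than anything deep; the brick-wall deletions of $W_\ell$ only remove every other vertical edge, so there is always a vertical edge available within any two consecutive columns, which is all the routing needs.

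The main obstacle I anticipate is purely notational: pinning down coordinates for $W_\ell$ compatibly with the deletion rule ``delete $(i,j)(i',j')$ with $j = j'$, $i = i'+1$, $j \equiv i \pmod 2$'' and then verifying, without an unwieldy case analysis, that the claimed vertical edges and horizontal subpaths genuinely survive in $W_\ell$. I would organise this by fixing once and for all a clean parametrisation of the surviving vertical edges (``a vertical edge joins $(i,j)$ and $(i,j+1)$ iff $i \equiv j \pmod 2$'', or the analogous correct statement), and then all routing claims reduce to parity arithmetic on the column indices, which can be stated in one or two lines. An alternative, if the explicit routing proves too fiddly to write cleanly, is to fall back on the tree-width argument cited before the lemma — $\operatorname{tw}(W_\ell) \ge \ell$, and any vertex set meeting every bag of an optimal bramble or every ``cross'' of the grid is well-linked — but the constructive proof is preferable because it simultaneously yields the three-disjoint-paths property for the same set $Z$.
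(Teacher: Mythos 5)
Your choice of $Z$ (roughly one degree-$3$ vertex per pair of columns, all in one row) and your sketch of the three internally disjoint paths between any two of them are essentially what the paper does — the paper takes the degree-$3$ vertices of a boundary row, so you match in spirit there.

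The well-linkedness argument is where you diverge, and where there is a genuine gap. You invoke Menger and then attempt the \emph{forward} direction, i.e.\ constructing the $k$ disjoint $A$--$B$ paths explicitly. The paper instead uses Menger in its \emph{separator} form: it suffices to show that no set $X$ of size $\le k-1$ separates $A$ from $B$. This turns out to be a one-paragraph pigeonhole argument, because $W_\ell$ contains $\ell$ pairwise disjoint vertical paths $V_1, \dots, V_\ell$ (each occupying two consecutive columns, each meeting $Z$ in at most one vertex) and $\ell$ pairwise disjoint horizontal paths $H_1, \dots, H_\ell$, each of which meets every $V_j$. Given $|X| \le k-1 < |A|$, some $V_{j_A}$ meets $A$ and avoids $X$, some $V_{j_B}$ meets $B$ and avoids $X$, and since $|X| < \ell$ some $H_{j_0}$ avoids $X$; then $V_{j_A} \cup H_{j_0} \cup V_{j_B}$ is a connected subgraph avoiding $X$ and meeting both $A$ and $B$, which yields an $A$--$B$ path avoiding $X$. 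No routing, pairing, or disjointness bookkeeping is required.

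Your constructive routing, as sketched, does not go through. You pair $A$ with $B$ ``arbitrarily,'' send each pair to a private horizontal track, and then propose to connect tracks ``at the far left \dots using disjoint columns.'' But with $Z$ at columns $2, 4, \dots, 2(\ell-2)$ there is only one column to the left of $Z$, so there is no room for $k$ disjoint vertical connectors there; and each ``vertical'' segment in a wall is in fact a zigzag occupying two adjacent columns, which halves the budget again. More fundamentally, when $A$ and $B$ interleave along the row of $Z$, any system of disjoint paths in the planar wall must actually thread around the intervening terminals, and specifying a pairing and a routing that does this without collisions is a nontrivial planar-linkage problem, not ``elementary grid bookkeeping.'' The separator formulation avoids all of this; I would switch to it, which also makes the spacing and column choices for $Z$ irrelevant — any transversal of the vertical paths works.
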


\begin{proof}
    Let~$Z$ be the set of all vertices of~$W_\ell$ which have the form~$(i,\ell)$ and degree~$3$.
    Note that~$Z$ has size~$\ell-2$ by the definition of~$W_\ell$. 
    It is easy to see that every two vertices of degree~$3$ are joined by three internally vertex-disjoint paths in~$W_{\ell}$ as~$\ell \ge 3$.
    Thus, it remains to show that for every two disjoint subsets~$A, B$ of~$Z$ with~$k := |A| = |B|$, there exist~$k$ vertex-disjoint~$A$--$B$ paths in~$W_{\ell}$.
    By Menger's theorem~\cite{menger1927}, it suffices to prove that for every set~$X \subseteq V(G)$ of size at most~$k-1$, there exists an~$A$--$B$~path in~$W_\ell$ which avoids~$X$.
    
    For every~$j = 1, \dots, \ell$ let~$V_j$ be the~$j$-th \emph{vertical path} of the wall~$W_\ell$, that is,~$V_j$ is the induced subgraph of~$W_{\ell}$ on the set of all vertices of the form~$(i,2j)$ or~$(i,2j-1)$.
    Note that these vertical paths are pairwise vertex-disjoint, that each vertical path meets~$Z$ in at most one vertex, and that each vertex in~$Z$ is contained in some vertical path.
    Thus, the pigeonhole principle yields a vertical path~$V_{j_A}$ which meets~$A$, but avoids~$X$, since~$|A| = k > k-1 \geq |X|$; analogously, we obtain a vertical path~$V_{j_B}$ for~$B$.
    
    Next, for every~$i = 1, \dots, \ell$ let~$H_j$ be the~$i$-th \emph{horizontal path} of the wall~$W_\ell$, that is, the subgraph of~$W_{\ell}$ induced on the set of all vertices of the form~$(i,j)$.
    Note that these horizontal paths are pairwise vertex-disjoint.
    Thus, the pigeonhole principle yields an horizontal path~$H_{i_0}$ which avoids~$X$, since~$\ell > |A| > |X|$.
    
    By definition, every horizontal path meets every vertical path.
    Hence,~$V_{j_A} + H_{i_0} + V_{j_B}$ is a connected subgraph of~$W_\ell$ which meets both~$A$ and~$B$, but avoids~$X$.
    In particular, there exists an~$A$--$B$~path in~$W_\ell$ which avoids~$X$, as desired.
\end{proof}

\begin{proof}[Proof of~\cref{main:AlphaThinImpliesNoWall}]
    We set
    \begin{equation*}
        \ell := \ell(\alpha) := [(\alpha^2 +1) (2(\alpha+1)+4) + \alpha^2 + \alpha] + [\alpha ((\alpha^2+1) (2 (\alpha+1) + \alpha +2) +\alpha^2 + \alpha)] +2,
    \end{equation*}
    and claim that this~$\ell$ is as desired.
    So let~$G$ be a graph, and let~$(T, \cX)$ be a \tcd\ of~$G$ of adhesion at most~$\alpha$ such that the~$3$-centres of its torsos are~\almostalphathin.
    Suppose for a contradiction that the wall~$W_\ell$ is strongly immersed in~$G$, and let~$U$ be the set of branch vertices of this strong immersion.
    
    By~\cref{lem:WellLinkedInWall}, $W_{\ell}$ contains a vertex set~$Z$ of size at least~$\ell-2$ such that~$Z$ is well-linked in~$W_{\ell}$ and every two vertices in~$Z$ are joined by three internally vertex-disjoint paths in~$W_{\ell}$.
    Let~$Z_U \subseteq U$ be the set of branch vertices corresponding to~$Z \subseteq V(W_\ell)$ in~$G$ via the strong immersion.
    Since~$Z$ is well-linked in~$W_\ell$, the definition of strong immersions yields that this set~$Z_U$ has the following property:
    \vspace{.5\baselineskip}
    \begin{equation*}
        \tag{$\ast$}
        \begin{aligned} \label{equ:PropStar}
            \parbox{\textwidth-3\parindent}{\emph{For every two disjoint sets~$A, B \subseteq Z_U$ with~$k := |A| = |B|$, there exist~$k$ edge-disjoint~$A$--$B$~paths in~$G$ and these~$k$ paths meet each vertex of~$U$ at most once.}}
        \end{aligned}
    \end{equation*}
    \vspace{.000001\baselineskip}
        
    \noindent Moreover, since every two vertices in~$Z$ are joined by three internally vertex-disjoint paths in~$W_{\ell}$, every two vertices in~$Z_U$ are joined by three edge-disjoint paths in~$G$.

    Consider an edge~$t_1 t_2 \in T$ and let~$Y_1 := \bigcup_{t \in T_1} X_{t}$ where~$T_1$ is the component of~$T - t_1 t_2$ containing~$t_1$, and define~$Y_2$ analogously.
    Since~$(T, \cX)$ has adhesion at most~$\alpha$, the adhesion set~$E_G(Y_1, Y_2)$ of~$G$ induced by~$t_1 t_2$ has size at most~$\alpha$.
    Therefore, precisely one~$Y_i$, say~$Y_2$, contains at least~$\alpha+1$ vertices from~$Z_U$ by~\cref{equ:PropStar} and since~$\ell -2 \geq 2 (\alpha+1)$.
    We then orient the edge~$t_1t_2$ from~$t_1$ to~$t_2$.
    In this way, $Z_U$ induces an orientation of the edges of~$T$ such that for each node~$t \in T$ at most one incident edge is oriented away from~$t$.

    Hence, there is a unique sink~$s$ of this orientation, that is, a node~$s \in T$ such that all incident edges are oriented towards~$s$.
    Let~$H$ be the torso of~$(T, \cX)$ at~$s$.
    For each neighbour~$t$ of~$s$ in~$T$, let~$Y_t := \bigcup_{t' \in T'} X_{t'}$ where~$T'$ is the component of~$T - st$ containing~$t$, and let~$v_t$ be the peripheral vertex of~$H$ corresponding to the identification of the vertices in~$Y_t$.
    
    Consider the set~$Z_H$ of vertices of~$H$ corresponding to the vertices in~$Z_U$, that is, the set consisting of the core vertices~$Z_U \cap X_s$ together with those peripheral vertices~$v_t$ with~$Z_U \cap Y_t \neq \emptyset$.
    This set~$Z_H$ is then contained in the~$3$-centre~$\bar{H}$ of the torso~$H$.
    Indeed, for every peripheral vertex~$v_t \in Z_H$, the set~$Z_U$ contains a vertex in~$Y_t$ by the definition of~$Z_H$. 
    Since~$st$ is oriented towards~$s$, the set~$Z_U$ also contains a vertex which is not in~$Y_t$.
    But these two vertices, just as any two vertices in~$Z_U$, are joined by three edge-disjoint paths in~$G$.
    Thus, $v_t$ is never a candidate for deletion or suppression in the construction of~$\bar{H}$ from~$H$.
    
    For every peripheral vertex~$v_t$ in~$Z_H$, we have~$|Z_U \cap Y_t| \le \alpha$, since~$st$ was oriented towards~$s$.
    This implies that~$|Z_H| \ge |Z_U \cap X_s| + |Z_U \setminus X_s|/\alpha$.
    In particular, $Z_H$ contains either many core vertices or many peripheral vertices of~$H$.
    
    We now aim to use the set~$Z_H$ to derive a contradiction to the~$3$-centre~$\bar{H}$ of the torso~$H$ being \almostalphathin.
    To do so, consider a set~$X$ of at most~$\alpha$ vertices of~$\bar{H}$ each of which has at most~$\alpha$ neighbours in~$\bar{H}$ and an enumeration~$v_1, \dots, v_n$ of~$V(\bar{H} - X)$ which witnesses that~$\bar{H} - X$ is~\alphathin.
    Observe that if we remove the at most~$\alpha^2$ neighbours of the set~$X$ among the~$v_i$, then this partitions the enumeration~$v_1, \dots, v_n$ into a set~$\cI$ of at most~$\alpha^2+1$ intervals.
    
    If~$Z_H$ contains many core vertices of~$H$ in that~$|Z_H \cap X_s| \geq (\alpha^2 +1) (2(\alpha+1)+4)+\alpha^2 + \alpha$, then~$\{v_1, \dots, v_n\}$ contains at least~$(\alpha^2 +1) (2(\alpha+1)+4)$ core vertices in~$Z_H$ which are not neighbours of~$X$.
    Thus, the pigeonhole principle yields an interval~$v_j, \dots, v_k$ in~$\cI$ containing at least~$2(\alpha+1)+4$ elements of~$Z_H \cap X_s$; shortening the interval if necessary, we may assume additionally that it contains precisely~$2(\alpha+1)+4$ elements of~$Z_H \cap X_s$ and that its endpoints~$v_j$ and~$v_k$ are among those.
    
    Let~$A_H$ be the set consisting of the~$2(\alpha+1)+2$ elements of~$Z_H \cap X_s$ in the interior~$\{v_{j+1}, \dots, v_{k-1}\}$ of our fixed interval, and let~$B_H$ be a subset of~$Z_H \cap X_s$ of the same size as~$A_H$ and which is disjoint from~$\{v_j, \dots, v_k\}$; such a set~$B_H$ exists since~$Z_H \cap X_s$ is large enough.
    Now~$A_H$ and~$B_H$ are subsets of~$Z_H$ consisting of core vertices of~$H$, and hence~$A_H, B_H \subseteq Z_U$.
    So we may apply~\cref{equ:PropStar} to~$A_H, B_H \subseteq Z_U$ and obtain~$2(\alpha+1)+2$ edge-disjoint~$A_H$--$B_H$ paths in~$G$ such that at most two of these paths meet~$v_j$ or~$v_k$, since~$v_j$ and~$v_k$ are contained in~$Z_H \cap X_s \subseteq Z_U \subseteq U$.

    Since the torso~$H$ arises from~$G$ by identifications of vertices keeping parallel edges as they arise, these~$2(\alpha+1)+2$ paths induce~$2(\alpha+1)+2$ edge-disjoint~$A_H$--$B_H$ walks in~$H$ which we can shorten to~$2 (\alpha+1)+2$ edge-disjoint~$A_H$--$B_H$ paths in~$H$.
    By the definition of~$3$-centres, these~$2(\alpha+1)+2$ paths in the torso~$H$ in turn induce~$2(\alpha+1)+2$ edge-disjoint~$A_H$--$B_H$ paths in the~$3$-centre~$\bar{H}$ of~$H$.
    We remark that throughout this process we maintain the property that at least~$2(\alpha+1)$ of these paths avoid~$v_j$ and~$v_k$.

    Since none of the vertices~$v_j, \dots, v_k$ is a neighbour of~$X$, these~$2 (\alpha + 1)$ edge-disjoint paths joining~$\{v_{j+1}, \dots, v_{k-1}\}$ and~$\{v_1, \dots, v_{j-1}\} \cup \{v_{k+1}, \dots, v_n\}$ indeed contain at least~$2(\alpha + 1)$ edges in~$\bar{H} - X$ joining~$\{v_{j+1}, \dots, v_{k-1}\}$ and~$\{v_1, \dots, v_{j-1}\} \cup \{v_{k+1}, \dots, v_n\}$.
    In particular, there are either at least~$\alpha + 1$ edges joining~$\{v_1, \dots, v_{j-1}\}$ and~$\{v_{j+1}, \dots, v_n\}$ or at least~$\alpha + 1$ edges joining~$\{v_1, \dots, v_{k-1}\}$ and~$\{v_{k+1}, \dots, v_n\}$.
    In both cases we obtain a contradiction to the choice of the enumeration~$v_1, \dots, v_n$ to witness the~\alphathin ness of~$\bar{H} - X$.

    If~$Z_H$ does not contain many core vertices as in the above case, then~$|Z_H| \ge |Z_U \cap X_s| + |Z_U \setminus X_s|/\alpha$ implies that~$Z_H$ contains many peripheral vertices in that~$|Z_H \setminus X_s| \geq (\alpha^2+1) (2 (\alpha+1) + \alpha +2) +\alpha^2 + \alpha$ by our choice of~$\ell$.
    We now proceed analogously to the above case of many core vertices in~$Z_H$.
    
    By the lower bound on the size of~$Z_H \setminus X_s$, there are at least~$(\alpha^2+1) (2 (\alpha+1) + \alpha +2)$ peripheral vertices in~$Z_H$ which are neither neighbours of~$X$ nor contained in~$X$.
    So the pigeonhole principle yields an interval~$v_j, \dots, v_k$ in~$\cI$ which contains at least~$2 (\alpha+1) + \alpha+2$ elements of~$Z_H \setminus X_s$.
    We may assume additionally, by shortening the interval if necessary, that the interval contains precisely~$2 (\alpha+1) + \alpha+2$ elements of~$Z_H \setminus X_s$ and that its endpoints~$v_j$ and~$v_k$ are among those.
    
    Let~$A_H$ be the set of the~$2 (\alpha+1) + \alpha$ elements of~$Z_H \setminus X_s$ in the interior~$\{v_{j+1}, \dots, v_{k-1}\}$ of our fixed interval, and let~$B_H$ be a set of~$2 (\alpha+1) + \alpha$ elements of~$Z_H \setminus X_s$ that are not in~$\{v_{j}, \dots, v_{k}\}$; such a set~$B_H$ exists since~$Z_H \setminus X_s$ is large enough.
    Since each peripheral vertex~$v_t$ in~$Z_H$ satisfies~$Z_U \cap Y_t \neq \emptyset$, we may replace each element~$v_t$ of~$A_H$ and~$B_H$ by an arbitrary vertex in~$Z_U \cap Y_t$ to obtain subsets~$A_U$ and~$B_U$ of~$Z_U$, respectively, which are disjoint and of the same size as~$A_H$ and~$B_H$ by the disjointness of the~$Y_t$.
    Applying~\cref{equ:PropStar} to~$A_U, B_U \subseteq Z_U$ then yields~$2 (\alpha+1) + \alpha$ edge-disjoint~$A_U$--$B_U$ paths in~$G$.
    
    The construction of~$A_U$ and~$B_U$ now guarantees that these~$2 (\alpha+1) + \alpha$ many~$A_U$--$B_U$~paths in~$G$ induce~$2 (\alpha+1) + \alpha$ edge-disjoint~$A_H$--$B_H$ paths in~$\bar{H}$ as above.
    Since~$(T, \cX)$ has adhesion at most~$\alpha$ and~$v_j, v_k \in Z_H \setminus X_s$ are neither contained in~$A_H$ nor in~$B_H$, each of the peripheral vertices~$v_j$ and~$v_k$ lies on at most~$\alpha/2$ of these paths.
    Thus, there are at least~$2 (\alpha+1) + \alpha - 2 \cdot \alpha/2 = 2 (\alpha+1)$ edge-disjoint paths in~$\bar{H}$ joining~$\{v_{j+1}, \dots, v_{k-1}\}$ and~$\{v_1, \dots, v_{j-1}\} \cup \{v_{k+1}, \dots, v_n\}$ which avoid~$v_j$ and~$v_k$.
    Since~$\{v_j, \dots, v_k\}$ contains no neighbours of~$X$, these paths contain at least~$2 (\alpha+1)$ edges in~$\bar{H} - X$ joining~$\{v_{j+1}, \dots, v_{k-1}\}$ and~$\{v_1, \dots, v_{j-1}\} \cup \{v_{k+1}, \dots, v_n\}$.
    Thus, there are either at least~$\alpha+1$ edges joining~$\{v_{j+1}, \dots,v_{k-1}\}$ and~$\{v_1, \dots, v_{j-1}\}$ or at least~$\alpha+1$ joining~$\{v_{j+1}, \dots, v_{k-1}\}$ and~$\{v_{k+1}, \dots, v_n\}$.
    In particular, there are either at least~$\alpha+1$ edges joining~$\{v_{j+1}, \dots, v_{n}\}$ and~$\{v_1, \dots, v_{j-1}\}$ or at least~$\alpha +1$ edges joining~$\{v_{1}, \dots, v_{k-1}\}$ and~$\{v_{k+1}, \dots, v_n\}$, a contradiction.
\end{proof}

\newpage

\section{Excluding strongly immersed walls} \label{sec:ExcludingWalls}

This section is devoted to the proof of~\cref{main:NoWallImpliesAlphaThin}, recalled here.

\begin{customthm}{\cref{main:NoWallImpliesAlphaThin}}
    For every integer~$\ell > 0$, there exists some integer~$\alpha = \alpha(\ell) > 0$ such that if a graph~$G$ does not contain the wall~$W_\ell$ as a strong immersion minor, then~$G$ has a {\tcd} of adhesion at most~$\alpha$ such that the~$3$-centres of its torsos are~\almostalphathin.
\end{customthm}

For the proof of~\cref{main:NoWallImpliesAlphaThin}, we first reduce the problem to~$3$-edge-connected graphs in~\cref{subsec:Reduction3Connected}.
In~\cref{subsec:immersionandtcs} we collect some tools from previous research dealing with immersions and \tcd s.
Then we finally prove~\cref{main:NoWallImpliesAlphaThin} in~\cref{subsec:proofofnowallimpliesalphathin}.

\subsection{Reduction to~\texorpdfstring{$3$}{3}-edge-connected graphs} \label{subsec:Reduction3Connected}

We begin by reducing the proof of~\cref{main:NoWallImpliesAlphaThin} to~$3$-edge-connected graphs.
This reduction will be done separately for each integer~$\ell > 0$; so throughout this section, let~$\ell > 0$ be an arbitrary, but fixed integer.
We first reduce our attention to graphs with minimum degree~$3$ which will then facilitate the reduction to~$3$-edge-connected graphs.

\begin{lemma} \label{lem:ReductionDegree3}
    If~\cref{main:NoWallImpliesAlphaThin} holds for all graphs with minimum degree~$3$, then it holds for all graphs.
\end{lemma}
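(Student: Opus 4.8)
The plan is to take an arbitrary graph $G$ that does not contain $W_\ell$ as a strong immersion minor, repeatedly delete vertices of degree at most $1$ and suppress vertices of degree $2$ to obtain a graph $G'$ of minimum degree $3$, apply the assumed version of \cref{main:NoWallImpliesAlphaThin} to $G'$ to get a suitable \tcd\ $(T, \cX')$ of $G'$, and then "unfold" this decomposition back to a \tcd\ of $G$ by reinserting the deleted/suppressed vertices. First I would observe that this reduction process does not create a strongly immersed $W_\ell$: deleting a vertex passes to a subgraph, and suppressing a degree-$2$ vertex does not help create a strong immersion of $W_\ell$ either (one can route any path that used the suppressed vertex through the merged edge instead, and $W_\ell$ has no vertices of degree $\le 2$ so a branch vertex is never suppressed). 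Hence $G'$ also excludes $W_\ell$ as a strong immersion minor and we get $\alpha' = \alpha'(\ell)$ and a \tcd\ $(T, \cX')$ of $G'$ of adhesion at most $\alpha'$ whose torsos' $3$-centres are \almostalphathin[\alpha'].

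Next I would describe how to extend $(T, \cX')$ to $G$. The suppressed degree-$2$ vertices and deleted degree-$\le 1$ vertices form a forest of "pendant paths and trees" hanging off $G'$ (more precisely, $V(G) \setminus V(G')$ induces a graph each of whose components attaches to $V(G')$ in a very restricted way: components removed by degree-$1$ deletions attach via a single edge to the rest, and a suppressed degree-$2$ vertex sits on an edge of $G'$). For each such leftover vertex $v$, I would add a new node $t_v$ to $T$ adjacent to (a node carrying) the appropriate attachment point, with part $X_{t_v} = \{v\}$; vertices that were suppressed on an edge $e = xy$ of $G'$ get attached to whichever endpoint's node is convenient — actually here it is cleaner to subdivide: since a suppressed vertex $v$ lies on a path in $G$ whose two ends $x,y$ are in $G'$ and whose interior was suppressed, we attach the whole interior path as a pendant path in $T$ off the node containing $x$, but then the edge $vy$ leaves the $x$-branch, so instead we should attach it so that the adhesion stays small. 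The key point to check is that each new adhesion set we create has size at most some $\alpha = \alpha(\alpha') = \alpha(\ell)$: an edge $st_v$ of the enlarged tree where $t_v$ begins a pendant path has adhesion set equal to the (at most $2$) edges of $G$ leaving the pendant path, and an edge inside the original $T$ has adhesion set unchanged or increased only by edges incident to reinserted vertices lying "on one side" — since each reinserted vertex has degree at most $2$ in $G$, and the pendant structure is attached at a single node, the adhesion sets of the original edges of $T$ are unchanged. So adhesion stays at most $\max(\alpha', 2) = \alpha'$.

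Finally I would verify the $3$-centre condition. The new nodes $t_v$ have torsos in which $v$ (a core vertex, degree $\le 2$) together with peripheral vertices of degree $\le 2$ form the whole torso, so the $3$-centre of such a torso is trivial (empty or a single vertex), hence $0$-thin, hence \almostalphathin[\alpha']. For the torsos at the original nodes $t \in T$: reinserting the pendant paths/trees only adds peripheral vertices of degree $\le 2$ to the torso $H_t$ (a whole pendant subtree on one side of $t$ collapses to one peripheral vertex, and that vertex has degree equal to the number of edges from that subtree into $\bigcup_{t'} X_{t'}$ for the $t$-side, which is at most $2$) and possibly subdivides existing peripheral vertices along an edge — in all cases, by the definition of $3$-centre these degree-$\le 2$ peripheral vertices are deleted or suppressed, so the $3$-centre of the new torso equals the $3$-centre of the old torso $H_t$, which is \almostalphathin[\alpha'] by hypothesis. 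Setting $\alpha := \alpha(\ell) := \alpha'(\ell)$ then works. The main obstacle I anticipate is the bookkeeping in the "unfolding" step: one must handle suppressed degree-$2$ vertices carefully (a maximal suppressed path may have both endpoints in $G'$, in which case its two boundary edges go to two different nodes of $T$, and one has to split the path into two pendant paths, one attached at each node, or argue that both endpoints can be taken in the same part), and one must confirm that no adhesion set blows up and that the $3$-centre genuinely collapses all the reinserted material; these are the details where an off-by-one in the degree counting or a missed case (e.g., a reinserted component attaching to a peripheral rather than core vertex of some torso) could go wrong, but none of it is conceptually deep.
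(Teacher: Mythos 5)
Your high-level idea is the same as the paper's: reinsert each removed vertex as a new singleton leaf part of the tree, observe that its adhesion set has size at most $2$, and note that it shows up as a degree-$\le 2$ vertex in the neighbouring torso and is therefore collapsed away when forming the $3$-centre. However, there is a genuine gap in how you execute this. You remove \emph{all} low-degree vertices at once to reach a graph of minimum degree $3$ and then try to unfold, and as you yourself anticipate, the bookkeeping becomes delicate: you have to understand the structure of the entire removed set and decide how to hang the corresponding pendant subtrees onto $T$. Your structural claim that the removed vertices form ``a forest of pendant paths and trees'' is in fact false: the removed set can contain cycles. Consider a triangle $v_1v_2v_3$ in which only $v_1$ has an additional edge to a high-degree vertex $u$; then $v_2$ and $v_3$ have degree $2$, and after suppressing $v_2$ and then $v_3$ one creates a loop at $v_1$, deletes it, and then removes $v_1$ as a degree-$1$ vertex. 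So the entire triangle disappears even though it is not a tree. Because your invariant about the removed set is wrong, the adhesion analysis and especially the claim that reinserted vertices appear only as degree-$\le 2$ peripheral vertices would both need to be re-derived for a more general removed structure, and you leave exactly these steps unresolved (``one has to split the path \ldots\ or argue \ldots''; ``these are the details where \ldots\ could go wrong'').

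The paper avoids all of this by proving the lemma by induction on $|G|$: it removes a \emph{single} vertex $v$ of degree at most $2$, applies the induction hypothesis to $G - v$ or the suppressed graph, and then attaches one new leaf node $t_v$ with part $\{v\}$ adjacent to a node containing a neighbour of $v$. With only one vertex in play, the adhesion change is immediate ($\le 2 \le \alpha$, and for old tree-edges each original adhesion edge is either preserved or replaced by a corresponding edge through $v$, leaving sizes unchanged), and the $3$-centre argument is a one-liner because $v$ appears as a single degree-$\le 2$ peripheral vertex in the one torso it touches. If you want to keep your ``all at once'' strategy, you would need to first establish the correct structure theorem for the removed set (it is not a forest) and then redo the adhesion and $3$-centre verifications; the cleaner fix is to switch to the one-vertex-at-a-time induction.
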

\begin{proof}
    Let~$\alpha := \alpha(\ell) \ge 2$ be an integer such that~\cref{main:NoWallImpliesAlphaThin} holds for all graphs with minimum degree~$3$.
    We now show that~\cref{main:NoWallImpliesAlphaThin} holds with the same~$\alpha$ for all graphs~$G$, and we will do so by induction on~$|G|$.
    The case~$|G| = 1$ is trivial; so suppose~$|G| > 1$, and assume that~$G$ does not contain the wall~$W_\ell$ as a strong immersion minor.

    Let~$v \in G$ be a vertex with degree~$d_G(v)$ at most~$2$ in~$G$, and let~$G'$ arise from~$G$ by deleting~$v$ if~$d_G(v) \leq 1$ and suppressing~$v$ (omitting any loops which arise) if~$d_G(v) = 2$.
    Clearly, $G'$ does not contain the wall~$W_\ell$ as a strong immersion minor, as~$G$ does not do so.
    Since~$|G'| < |G|$, we can apply the induction hypothesis to~$G'$ to obtain a {\tcd}~$(T', \cX')$ of~$G'$ of adhesion at most~$\alpha$ such that the~$3$-centres of its torsos are~\almostalphathin.
    
    If~$d_G(v) \ge 1$, let~$u$ be a neighbour of~$v$ in~$G$, and otherwise, let~$u$ be an arbitrary vertex in~$G$ other than~$v$.
    Then there exists a (unique) node~$t_u \in T'$ with~$u \in X'_{t_u}$.
    We then obtain a {\tcd}~$(T, \cX)$ of~$G$ from~$(T', \cX')$ in that~$T$ arises from~$T'$ by adding a vertex~$t_v$ adjacent to~$t_u$ and in that we set~$X_{t_v} := \{v\}$ while all other parts remain unchanged.
    We claim that~$(T, \cX)$ is as desired.
    
    We first prove that~$(T, \cX)$ has adhesion at most~$\alpha$.
    The edge~$t_u t_v \in T$ induces an adhesion set of size at most~$2 \le \alpha$.
    Every other edge~$e$ of~$T$ is also an edge of~$T'$, and the adhesion set of~$(T, \cX)$ induced by~$e$ contains the same edges as the one of~$(T', \cX')$ induced by~$e$ -- except that if~$v$ has a second neighbour~$w \neq u$ in~$G$, then the edge of~$G'$ that arose by suppressing~$v$ is replaced by the edge joining~$v$ and~$w$ in~$G$.
    This replacement, however, leaves the size of the adhesion set unchanged.
    Hence, the adhesion of~$(T, \cX)$ is at most~$\alpha$ since~$(T', \cX')$ has adhesion at most~$\alpha$.
    
    It remains to show that the~$3$-centres of the torsos of~$(T, \cX)$ are~\almostalphathin.
    Since~$v$ has degree at most~$2$ in~$G$, the~$3$-centre of the torso of~$(T, \cX)$ at~$t_v$ consists only of the vertex~$v$ and is hence~\almostalphathin.
    For all other nodes~$t \in T$, the~$3$-centre of the torso of~$(T, \cX)$ at~$t$ equals the~$3$-centre of the torso of~$(T', \cX')$ at~$t$, since the new peripheral vertex~$v$ of the torso of~$(T, \cX)$ at~$t = t_u$ has degree at most~$2$ and is hence deleted or suppressed in the construction of the~$3$-centre from the torso.
\end{proof}

\begin{lemma}\label{lem:Reduction3EdgeConn}
    If \cref{main:NoWallImpliesAlphaThin} holds for all~$3$-edge-connected graphs, then it holds for all graphs.
\end{lemma}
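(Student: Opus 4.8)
The plan is to prove the statement directly, by induction on $|V(G)|$, for every graph $G$ that contains no strong immersion of $W_\ell$; this makes a separate appeal to \cref{lem:ReductionDegree3} unnecessary, as the low-degree case below re-proves its content. Fix the integer $\alpha_0 = \alpha_0(\ell)$ provided by \cref{main:NoWallImpliesAlphaThin} for $3$-edge-connected graphs and set $\alpha := \max\{\alpha_0, 2\}$; since an (almost) $\alpha_0$-thin graph is also (almost) $\alpha$-thin, this $\alpha$ will witness \cref{main:NoWallImpliesAlphaThin} in general.

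If $|V(G)| \le 1$ we are done trivially, and if $G$ is $3$-edge-connected we are done by the hypothesis (using $\alpha \ge \alpha_0$). If $G$ has a vertex of degree at most $2$, delete or suppress it, apply the induction hypothesis to the resulting smaller graph, and reinsert the vertex exactly as in the proof of \cref{lem:ReductionDegree3}. So we may assume $G$ has minimum degree at least $3$ and is not $3$-edge-connected. Then $V(G)$ partitions into nonempty sets $A$ and $B$ with $|E_G(A,B)| \le 2$, and the minimum-degree assumption forces $|A|, |B| \ge 2$. If $G$ is disconnected we pick this partition with $E_G(A,B) = \emptyset$; if $G$ is connected it has an edge cut of size $1$ or $2$, and using that a connected graph one of whose $2$-edge-cuts has a side inducing a disconnected subgraph must contain a bridge, we may choose $A, B$ so that $G[A]$ and $G[B]$ are both connected.

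Let $G_A$ arise from $G$ by contracting $B$ to a single new vertex $v_B$ (with the dual definition of $G_B$), except that when $E_G(A,B) = \emptyset$ we simply take $G_A := G[A]$ and $G_B := G[B]$. Then $v_B$ has degree $|E_G(A,B)| \le 2$, and $|V(G_A)|, |V(G_B)| < |V(G)|$. The crucial point is that $G_A$ is a strong immersion minor of $G$: an edge incident with $v_B$ may be rerouted along a path inside the connected graph $G[B]$ between the two relevant attachment vertices, and since $v_B$ has degree at most $2$ at most one such rerouting is needed, so edge-disjointness and the absence of internal branch vertices on the other paths are preserved; by transitivity of strong immersion, $G_A$ (and likewise $G_B$) contains no strong immersion of $W_\ell$. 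Applying the induction hypothesis yields \tcd s $(T_A, \cX_A)$ of $G_A$ and $(T_B, \cX_B)$ of $G_B$, each of adhesion at most $\alpha$ and with almost $\alpha$-thin $3$-centres of torsos. We glue them: let $T$ consist of $T_A$ and $T_B$ joined by a new edge between the node of $T_A$ whose part contains $v_B$ and the node of $T_B$ whose part contains $v_A$, and remove $v_B, v_A$ from those two parts (for $E_G(A,B) = \emptyset$ just join $T_A$ and $T_B$ by an edge inducing the empty adhesion set). This is a \tcd\ of $G$, its new edge induces exactly the adhesion set $E_G(A,B)$, so the adhesion stays at most $\alpha$, and every torso other than the two at the ends of the new edge is, as a graph with distinguished core set, identical to a torso of $(T_A,\cX_A)$ or $(T_B,\cX_B)$ and so has an unchanged $3$-centre. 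At each of the two exceptional nodes the torso of $(T,\cX)$ coincides as a graph with the corresponding torso of $(T_A,\cX_A)$ resp.\ $(T_B,\cX_B)$, except that $v_B$ resp.\ $v_A$ — of degree at most $2$ — is now a peripheral vertex rather than a core vertex; hence its $3$-centre is obtained from the old (almost $\alpha$-thin) $3$-centre by at most deleting or suppressing this single vertex together with the ensuing cleanup, which by \cref{lem:AlphaThinPrimeSubdividingSuppressing} and the closure of almost $\alpha$-thinness under subgraphs keeps it almost $\alpha$-thin. This closes the induction.

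The main obstacle is the bookkeeping at the two interface nodes: one must be sure that declaring $v_B$ (resp.\ $v_A$) peripheral rather than core does not make the $3$-centre grow and that what remains is still almost $\alpha$-thin — this goes through precisely because $v_B$ has degree at most $2$, which is forced by the adhesion bound, so that $v_B$ is actually removed while forming the new $3$-centre. The secondary point requiring care is the implication that $W_\ell$ strongly immersed in $G_A$ yields $W_\ell$ strongly immersed in $G$; here again it is the bound of $2$ on the number of edges leaving the contracted side that keeps the rerouting through $G[B]$ edge-disjoint from the rest of the immersion. (An alternative would be to apply the hypothesis to all torsos of the classical decomposition of $G$ into its $3$-edge-connected components simultaneously, but the inductive argument above avoids invoking that decomposition.)
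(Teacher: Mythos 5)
Your proposal is correct and follows essentially the same route as the paper: split $G$ along an edge cut of size at most two, contract each side, apply the inductive hypothesis to the two contracted graphs, and glue the resulting tree-cut decompositions along a new tree edge whose adhesion set is the cut itself, noting that the contracted vertex — now peripheral and of degree at most two — is removed or suppressed when passing to the $3$-centre, so \cref{lem:AlphaThinPrimeSubdividingSuppressing} and closure under subgraphs preserve almost $\alpha$-thinness. The paper organises this slightly differently (it first invokes \cref{lem:ReductionDegree3} as a separate step rather than folding the low-degree case into one induction, and it secures connectedness of $G[A]$ and $G[B]$ by choosing a cut of globally minimum size rather than via your bridge argument), and it merely asserts that $G^A,G^B$ contain no strong immersion of $W_\ell$ where you supply the rerouting argument; these are cosmetic differences, not a change of approach.
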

\begin{proof}
    Let~$\alpha := \alpha(\ell) \ge 2$ be an integer such that~\cref{main:NoWallImpliesAlphaThin} holds for all~$3$-edge-connected graphs.
    By~\cref{lem:ReductionDegree3}, it is enough to prove the statement for all graphs~$G$ of minimum degree~$3$, and we will do so by induction on~$|G|$.
    The case~$|G| = 1$ is trivial; so suppose that~$|G| > 1$, that~$G$ has minimum degree~$3$, that~$G$ is not~$3$-edge-connected, and that~$G$ does not contain the wall~$W_\ell$ as a strong immersion minor.

    Let~$\{A, B\}$ be a bipartition of~$V(G)$ such that the size of the cut~$E_G(A, B)$ of~$G$ is minimal among all bipartitions of~$V(G)$.
    Since~$G$ is not~$3$-edge-connected, the cut~$E_G(A, B)$ has size at most~$2$.
    Hence, both~$A$ and~$B$ have size at least~$2$, since~$G$ has minimum degree~$3$.
    Let~$G^A$ arise from~$G$ by contracting~$B$ to a single vertex~$b$, keeping parallel edges as they arise, but omitting loops; analogously define~$G^B$ by contracting~$A$ to~$a$.
    Then both~$G^A$ and~$G^B$ are strictly smaller than~$G$, and they do not contain~$W_\ell$ as a strong immersion minor, since~$G$ does not do so.
    By applying the induction hypothesis, we obtain {\tcd}s~$(T^A, \cX^A)$ of~$G^A$ and~$(T^B, \cX^B)$ of~$G^B$ which both have adhesion at most~$\alpha$ and such that the~$3$-centres of their torsos are~\almostalphathin.

    Using~$(T^A, \cX^A)$ and~$(T^B, \cX^B)$ we now construct a {\tcd}~$(T, \cX)$ of~$G$ which, as we will show afterwards, has the desired properties.\footnote{This construction follows the one in terms of edge-sums given in~\cite{WollanImmersion}*{Lemma 5}.}
    Let~$t^A \in T^A$ and~$t^B \in T^B$ be the (unique) nodes with~$b \in X^A_{t^A}$ and~$a \in X^B_{t^B}$.
    We then define the tree~$T$ as the disjoint union of~$T^A$ and~$T^B$ together with an edge joining~$t^A$ and~$t^B$.
    We set~$X_{t^A} := X^A_{t^A} \setminus \{b\}$ and~$X_{t^B} := X^B_{t^B} \setminus \{a\}$, and take~$X_t$ as the corresponding~$X^A_t$ or~$X^B_t$ for all other nodes~$t \in T$.

    Let us first check that~$(T, \cX)$ has adhesion at most~$\alpha$.
    By construction, every adhesion set of~$(T, \cX)$ that is induced by an edge of~$T$ other than~$t^A t^B$ has the same size as the adhesion set of~$(T^A,\cX^A)$ or~$(T^B, \cX^B)$ induced by the corresponding edge of~$T^A$ or~$T^B$.
    The adhesion set of~$(T, \cX)$ induced by~$t^A t^B$ is precisely the cut~$E_G(A, B)$ and hence of size at most~$2 \le \alpha$.

    Next, we verify that the~$3$-centres of the torsos of~$(T, \cX)$ are~\almostalphathin.
    For any node~$t \in T$ with~$t \neq t^A, t^B$, the torso of~$(T, \cX)$ at~$t$ equals the corresponding torso of~$(T^A, \cX^A)$ or~$(T^B, \cX^B)$ and hence, its~$3$-centre is~\almostalphathin.
    The torso~$H$ of~$(T, \cX)$ at~$t^A$ differs from the torso~$H^A$ of~$(T^A, \cX^A)$ at~$t^A$ only in that~$b$ is not a core vertex any more, but a peripheral vertex now.
    So the~$3$-centre of~$H$ arises from the one of~$H^A$ by deleting~$b$ if~$d(b) \le 1$ and suppressing~$b$ if~$d(b) = 2$.
    But then the~$3$-centre of the torso~$H$ is~\almostalphathin\ since the~$3$-centre of~$H^A$ is, because a graph remains~\almostalphathin\ after the deletion of a vertex by definition and the suppression of a vertex of degree~$2$ does not affect the~\almostalphathin ness by~\cref{lem:AlphaThinPrimeSubdividingSuppressing}.
    The analysis of the torso at~$t^B$ is symmetrical.
\end{proof}

\subsection{Immersions and \tcd s}\label{subsec:immersionandtcs}

In this section we collect some tools and auxiliary results for the proof of~\cref{main:NoWallImpliesAlphaThin}.

The first lemma asserts that the absence of a large wall as a strong immersion minor implies a bound on the number of neighbours of every vertex.
This is the lemma for whose application in the proof of~\cref{main:NoWallImpliesAlphaThin} we reduced to~$3$-edge-connected graphs in~\cref{subsec:Reduction3Connected}.
\newcommand{\maxneighbourhood}[1]{g({#1})}%
\begin{lemma}[\cite{dvorak2014strong}*{Corollary 1.3}] \label{cor:DegreeIsBounded}
    For every integer~$\ell > 0$, there is an integer~$\maxneighbourhood{\ell} > 0$ such that
    if a~$3$-edge-connected graph~$G$ contains a vertex with at least~$\maxneighbourhood{\ell}$ neighbours, then the wall~$W_\ell$ is a strong immersion minor of~$G$.
\end{lemma}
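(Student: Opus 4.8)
The statement to prove is Lemma~\ref{cor:DegreeIsBounded}: for every $\ell>0$ there is $g(\ell)>0$ such that if a $3$-edge-connected graph $G$ has a vertex $v$ with at least $g(\ell)$ neighbours, then $W_\ell$ is a strong immersion minor of $G$.

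\medskip

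The plan is to build the wall $W_\ell$ inside $G$ using the high-degree vertex $v$ as a ``hub'' from which many edge-disjoint paths emanate. First I would recall that $W_\ell$ has maximum degree $3$ and $2\ell^2-2$ vertices, so it suffices to realise it as a subcubic subdivision-like structure where branch vertices are genuine vertices of $G$ and the connecting paths are internally disjoint from the branch set (that is what ``strong'' demands). The natural strategy is to first extract from $G$ a large ``fan'' or ``flower'': since $d_G(v)\geq g(\ell)$, there are $g(\ell)$ edges leaving $v$, and by $3$-edge-connectivity every cut separating two of the resulting stubs has size at least $3$, which should let us route many edge-disjoint paths from $v$ out into $G$ and, crucially, have them come back — producing a large bundle of internally-disjoint cycles through $v$, or a large ``theta-like'' gadget. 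More precisely, I would aim to find either (a) a vertex $w\neq v$ together with many, say $\sim g(\ell)^{1/2}$ or so, edge-disjoint $v$--$w$ paths, or (b) a long enough linkage structure that already contains a subdivision of a large wall.

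\medskip

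The cleanest route is probably to reduce to a known ``local'' structure theorem. If $v$ has $g(\ell)$ neighbours, pick $g(\ell)$ of the incident edges and consider the auxiliary graph obtained by splitting $v$ into that many degree-$1$ vertices; $3$-edge-connectivity of $G$ then gives high edge-connectivity between these terminals in a suitable sense, so by a Menger/Mader-type argument one obtains a large family of edge-disjoint paths pairing up the terminals — equivalently, many edge-disjoint cycles through $v$. Edge-disjoint cycles through a common vertex, when there are enough of them, force a large wall immersion: one can think of the cycles as the ``horizontal'' structure and use $v$ plus short connections as the ``vertical'' rungs; alternatively, a bundle of $k$ edge-disjoint $v$--$v$ walks gives, after cleanup, a graph with a vertex of degree $\geq 2k$ all of whose incident edges lie on a connected subgraph, from which a $W_\ell$ immersion can be carved once $k$ is polynomial in $\ell$. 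Since the paper cites this as Corollary~3 of \cite{dvorak2014strong}, I would invoke exactly that argument rather than reprove the grid/wall construction from scratch; the function $g(\ell)$ will be some explicit polynomial (indeed in that reference it is roughly $g(\ell)=\ell^{O(1)}$, coming from a Ramsey- or counting-type bound on how many edge-disjoint paths force a wall).

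\medskip

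The main obstacle is the ``strong'' part of strong immersion: it is easy to get many edge-disjoint paths through $v$ (a weak immersion of a large ``book'' or ``friendship'' graph), but one must ensure the connecting paths in the final wall are internally disjoint from the $2\ell^2-2$ chosen branch vertices. The standard fix is to work with a minimal counterexample or a minimal such path system: if some branch-candidate vertex is used internally by a path, reroute or re-choose branch vertices among the many available high-degree structures, using $3$-edge-connectivity to guarantee the reroutings exist. Concretely, I expect the proof to (i) bootstrap from $d_G(v)\geq g(\ell)$ to a large set of edge-disjoint cycles/paths through $v$ via edge-connectivity, (ii) clean these up — discarding those hitting too many special vertices and suppressing/deleting irrelevant low-degree vertices — to expose a subcubic core, and (iii) recognise a $W_\ell$ topological-minor-style subdivision inside that core, which for subcubic host structures coincides with a strong immersion. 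I would not grind through the exact polynomial bound on $g(\ell)$; it is whatever \cite{dvorak2014strong}*{Corollary~3} provides, and the paper only needs its existence.
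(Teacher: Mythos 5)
The paper does not prove this lemma; it imports it verbatim as Corollary~3 of \cite{dvorak2014strong} and treats it as a black box, exactly as you do in your final paragraph by deferring the bound $g(\ell)$ to that reference. Your speculative sketch of what the underlying argument ``should'' look like (many edge-disjoint paths through the high-degree vertex, cleanup to a subcubic core, recognise a wall) is never pinned down and would not stand on its own, but since the paper itself offers no proof here and you ultimately invoke the same citation, your approach coincides with the paper's.
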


The next two lemmas again exclude specific substructures in the absence of a large wall as a strong immersion minor.
First, for every two integers~$k, n > 0$, let~$S_{k, n}$ be the graph on~$n+1$ vertices~$x, v_1, \dots, v_n$ with~$k$ parallel edges joining~$x$ and~$v_i$ for~$i = 1, \dots, n$.
For every graph~$G$ on~$n$ vertices which has maximum degree~$k$, one can greedily construct a strong immersion of~$G$ in~$S_{k,n}$ (cf.~\cite{WollanImmersion}*{Observation 1}).
\begin{lemma} \label{lem:SpiderContainsAll}
    For every two integers~$k, n > 0$, the graph~$S_{k, n}$ contains every graph on~$n$ vertices with maximum degree~$k$ as a strong immersion minor.
    In particular, $S_{3, 2 \ell^2}$ contains the wall~$W_\ell$ as a strong immersion minor for every integer~$\ell > 0$. \qed
\end{lemma}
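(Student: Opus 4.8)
The plan is to give an explicit greedy construction of the required strong immersion, in the spirit of~\cite{WollanImmersion}*{Observation~1}. Let~$H$ be an arbitrary graph on~$n$ vertices of maximum degree at most~$k$. First I would fix a bijection between~$V(H)$ and the set~$\{v_1, \dots, v_n\}$ of non-central vertices of~$S_{k,n}$ and take these~$v_i$ as the branch vertices, leaving the centre~$x$ out of the image. Since~$H$ has no loops and every~$v_i$--$v_j$ path in~$S_{k,n}$ with~$i \neq j$ must pass through~$x$, the image of each edge~$v_iv_j \in E(H)$ is forced to be a path of the form~$v_i\,x\,v_j$; the only freedom is which of the~$k$ parallel~$v_ix$-edges and which of the~$k$ parallel~$v_jx$-edges it uses.

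The key step is to make these choices so that the paths become pairwise edge-disjoint. For each~$i$, since~$d_H(v_i) \le k$, I would fix an injection~$\iota_i$ from the set of edges of~$H$ incident to~$v_i$ into the set of the~$k$ parallel~$v_ix$-edges of~$S_{k,n}$. Then for each edge~$e = v_iv_j$ of~$H$ I route~$e$ along the path~$v_i\,x\,v_j$ whose two edges are~$\iota_i(e)$ and~$\iota_j(e)$. If two distinct edges of~$H$ gave paths sharing an edge of~$S_{k,n}$, that shared edge would be some~$v_ix$-edge, forcing both~$H$-edges to be incident to~$v_i$ and to have the same~$\iota_i$-value, which is impossible. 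Since the unique internal vertex of every such path is~$x$, which is not a branch vertex, the immersion is automatically strong; this proves the general statement.

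For the displayed consequence, I would use that~$W_\ell$ has~$2\ell^2 - 2$ vertices and maximum degree~$3$: after adding two isolated vertices it becomes a graph on exactly~$2\ell^2$ vertices of maximum degree~$3$, so the general statement yields a strong immersion of this graph, and hence of its subgraph~$W_\ell$, into~$S_{3, 2\ell^2}$.

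I do not expect a genuine obstacle here; the only point requiring care is the edge-disjointness, which is exactly why one argues vertex by vertex with the injections~$\iota_i$, using that the number of (possibly parallel) edges of~$H$ at a vertex is at most the number~$k$ of parallel edges available at the corresponding leaf of~$S_{k,n}$.
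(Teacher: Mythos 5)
Your proof is correct and spells out precisely the greedy vertex-by-vertex construction that the paper only alludes to: the paper gives no argument at all beyond the remark that ``one can greedily construct'' the immersion, citing \cite{WollanImmersion}*{Observation 1}, and marks the lemma with a \verb|\qed|. Your explicit choice of the injections~$\iota_i$ and the routing of each~$H$-edge through~$v_i\,x\,v_j$ is exactly that greedy construction made precise, and your handling of the corollary (padding~$W_\ell$ by two isolated vertices, or equivalently passing to the subgraph~$S_{3,2\ell^2-2}$) is fine.
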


Secondly, let~$P_n * v$ denote the graph arising from the path~$P_n$ of length~$n$ by adding a vertex~$v$ that is adjacent to every vertex of~$P_n$.
\begin{lemma}\label{lem:PathWithApexContainsWall}
    For every integer~$ k > 0$, the graph~$P_{3k - 1} * v$ contains a subdivision of~$S_{3, k}$.
    In particular, the wall~$W_\ell$ is a strong immersion minor of~$P_{6\ell^2 -1} * v$ for every integer~$\ell > 0$.
\end{lemma}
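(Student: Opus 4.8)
The plan is to exhibit an explicit subdivision of $S_{3,k}$ inside $P_{3k-1} * v$, using the apex vertex $v$ as the center $x$ of the spider and carving the leaves and the subdivided parallel edges out of the path $P_{3k-1}$ together with spoke edges to $v$. Write the path as $u_0 u_1 \dots u_{3k-1}$, and group its internal structure into $k$ consecutive blocks, the $i$-th block consisting of the three vertices $u_{3i-3}, u_{3i-2}, u_{3i-1}$ (for $i = 1, \dots, k$) and the path-edges inside and between blocks. The branch vertex playing the role of the $i$-th leaf $v_i$ of $S_{3,k}$ will be the middle vertex $u_{3i-2}$ of the $i$-th block.

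The three internally disjoint $x$–$v_i$ paths realising the three parallel edges between $x$ and $v_i$ are then: first, the single spoke edge $v\,u_{3i-2}$; second, the two-edge path $v\,u_{3i-3}\,u_{3i-2}$ using one spoke plus one path-edge; third, the two-edge path $v\,u_{3i-1}\,u_{3i-2}$ using one spoke plus one path-edge. The only vertices these three paths use besides $x = v$ and the branch vertex $u_{3i-2}$ are $u_{3i-3}$ and $u_{3i-1}$, which lie in no other block, so the subdivided triple-edges for distinct $i$ are internally disjoint; and each spoke edge and each path-edge of $P_{3k-1}$ is used at most once overall. Hence this collection of paths is a subdivision of $S_{3,k}$ in $P_{3k-1} * v$, which proves the first assertion. (The path-edges linking consecutive blocks, i.e.\ $u_{3i-1}u_{3i}$, are simply left unused.)

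For the ``in particular'' clause, set $k := 2\ell^2$, so that $3k - 1 = 6\ell^2 - 1$ and $P_{6\ell^2 - 1} * v$ contains a subdivision of $S_{3, 2\ell^2}$. A subdivision of a graph $H$ is in particular a strong immersion of $H$ (branch vertices are the branch vertices of the immersion, and the subdivided edges are internally disjoint paths meeting the branch set only at their ends), so $S_{3, 2\ell^2}$ is a strong immersion minor of $P_{6\ell^2-1} * v$. Since the strong immersion relation is transitive and, by \cref{lem:SpiderContainsAll}, the wall $W_\ell$ is a strong immersion minor of $S_{3, 2\ell^2}$, it follows that $W_\ell$ is a strong immersion minor of $P_{6\ell^2 - 1} * v$, as claimed.

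I expect no real obstacle here: the argument is a direct construction, and the only thing requiring a little care is the bookkeeping that the two-edge detours $v\,u_{3i-3}\,u_{3i-2}$ and $v\,u_{3i-1}\,u_{3i-2}$ from different blocks do not share internal vertices or reuse a spoke edge — which is immediate once the blocks are taken to be disjoint triples of consecutive path-vertices, as above. One should just double-check the index arithmetic so that the $k$ blocks fit exactly into the $3k$ vertices $u_0, \dots, u_{3k-1}$ of $P_{3k-1}$, which they do.
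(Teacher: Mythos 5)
Your proposal is correct and is essentially the same construction as the paper's one-line proof: the paper says ``delete every third edge of $P_{3k-1}$'', which splits the path into exactly your $k$ blocks of three consecutive vertices and turns $P_{3k-1}*v$ into a subdivision of $S_{3,k}$ with $v$ as the centre and the middle vertex of each block as a leaf. You merely spell out the resulting internally disjoint two-edge detours and the passage to $W_\ell$ via \cref{lem:SpiderContainsAll}, which the paper leaves implicit.
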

\begin{proof}
    Deleting every third edge on the path~$P_{3k-1}$, the graph~$P_{3k - 1} * v$ becomes a subdivision of~$S_{3,k}$.
    The observation on $W_\ell$ follows immediately from~\cref{lem:SpiderContainsAll}.
\end{proof}

The following results together show that the absence of a subdivision of the wall~$W_\ell$, which is implied by the absence of~$W_\ell$ as a strong immersion, implies the existence of a {\tcd} whose adhesion and torso size are both bounded in terms of the maximum degree and the size~$\ell$ of the wall.

\newcommand{\treewidthbound}[1]{w({#1})}%
\begin{theorem}[Grid Theorem, \cite{GMV}*{(2.1)}]\label{lem:NoWallAsSubdivisionYieldsBoundedTW}
    For every integer~$\ell > 0$, there exists an integer~$\treewidthbound{\ell} > 0$ such that if a graph~$G$ does not contain a subdivision of the wall~$W_\ell$, then~$G$ has tree-width at most~$\treewidthbound{\ell}$.
\end{theorem}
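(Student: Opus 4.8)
This is the classical Grid Theorem of Robertson and Seymour, so the plan is not to reinvent it but to recall the standard route, reducing in two cheap steps to its grid‑minor form. First, since $W_\ell$ has maximum degree $3$, containing $W_\ell$ as a topological minor is equivalent to containing it as an ordinary minor: given a minor model of $W_\ell$, prune each branch set to a Steiner tree connecting its at most three ``exit'' vertices; after suppressing degree‑$2$ vertices such a tree is a path or a subdivided claw, which is exactly the local picture at a vertex of degree at most $3$, and splicing these trees along the model edges produces a subdivision of $W_\ell$. Hence the hypothesis is equivalent to ``$G$ has no $W_\ell$ minor''. Second, by the very definition of $W_\ell$ as a subgraph of the $\ell\times 2\ell$ grid, $W_\ell$ is a minor of the $2\ell\times 2\ell$ grid, so ``$G$ has no $W_\ell$ minor'' implies ``$G$ has no $(2\ell\times 2\ell)$‑grid minor''. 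It therefore suffices to prove the core statement: there is a function $f$ such that every graph of tree‑width more than $f(r)$ contains the $r\times r$ grid as a minor; one then takes $\treewidthbound{\ell} := f(2\ell)$.

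For the core statement I would invoke tree‑width duality. If the tree-width of $G$ is large, then $G$ carries a bramble of large order (equivalently a tangle of large order, or---after a bounded loss---a large \emph{well‑linked} set of vertices $Z$, meaning that for all disjoint $A,B\subseteq Z$ with $|A|=|B|$ there are $|A|$ vertex‑disjoint $A$--$B$ paths in $G$). Fix such a $Z$ whose size is a sufficiently large function of $r$.

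From $Z$ one builds the grid greedily. Using Menger's theorem and the well‑linkedness of $Z$, split a part of $Z$ into two halves and route a large linkage $\cP$ of disjoint paths between them; then feed the remaining part of $Z$ together with $\cP$ back into the linkage machinery to extract a second, transversal family of paths crossing many members of $\cP$; iterating this, while discarding the paths that behave badly and rerouting through the slack that a large $Z$ provides, eventually exhibits an $r\times r$ grid minor inside $G$. The hard part is exactly this last stage: converting mere linkedness into an honest grid is the combinatorial heart of the theorem---organised in the original proof through tangles, and in later treatments through a ``path‑of‑sets'' system---and the bookkeeping that forces the horizontal and vertical path systems to interlock like a grid, rather than collapse onto one another, is what makes $f$, and hence $\treewidthbound{\ell}$, grow so fast. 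Finally the two reductions run backwards: if $G$ had tree‑width more than $f(2\ell)$ it would contain a $(2\ell\times 2\ell)$‑grid minor, hence a $W_\ell$ minor, hence---by subcubicity---a subdivision of $W_\ell$, contrary to hypothesis; so the tree-width of $G$ is at most $f(2\ell)=\treewidthbound{\ell}$, as claimed.
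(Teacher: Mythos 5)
The paper does not prove this statement; it is cited verbatim from Robertson and Seymour~\cite{GMV} (their (2.1)) and used as a black box, so there is no paper-internal proof to compare against. Your two reductions are correct and standard: since $W_\ell$ has maximum degree~$3$, a $W_\ell$ minor model can be pruned to Steiner trees with at most three leaves in each branch set and spliced into a subdivision, and $W_\ell$ is a subgraph of the $\ell\times 2\ell$ grid and hence a minor of a square grid. The remainder of your sketch is an accurate high-level description of the bramble/well-linked-set route to the grid theorem, but it stops well short of an actual argument for the hard step (building the grid from a well-linked set or tangle), which is precisely the part that cannot be done in a few lines; for the purposes of this paper, the correct move is simply to cite~\cite{GMV}, as the authors do.
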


\begin{lemma}[\cite{WollanImmersion}*{Lemma 12}]\label{lem:BoundedAdhesionAndTorsosViaTreewidthAndDegree}
    Let~$w, d > 0$ be integers, and let~$G$ be a graph with maximum degree at most~$d$ and tree-width at most~$w$.
    Then there exists a \tcd\ of~$G$ of adhesion at most~$(2w + 2)d$ such that every torso has at most~$(d + 1)(w + 1)$ vertices.
\end{lemma}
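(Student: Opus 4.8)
The plan is to convert a tree-decomposition of $G$ of width at most $w$ into a \tcd\ by the standard ``first-appearance'' construction, after first reshaping its host tree so that every node has small degree. So I would start from a tree-decomposition $(T_0,(W_t)_{t\in T_0})$ of $G$ with $|W_t|\le w+1$ for all $t$, root it at an arbitrary node, and then, whenever a node $t$ of $T_0$ has more than $d(w+1)$ neighbours, replace $t$ by a path $c_0-c_1-\dots-c_k$ of copies of $t$, each carrying the bag $W_t$ and with $c_0$ closest to the root, redistributing the former neighbours of $t$ among them so that $c_0$ has degree at most $d(w+1)$ while each \emph{secondary copy} $c_1,\dots,c_k$ has degree at most $3$ (possible since $d(w+1)\ge 2$). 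As all copies carry the same bag, the three tree-decomposition axioms survive this operation, so it produces a tree-decomposition $(T,(W_t)_{t\in T})$ of $G$ of width at most $w$, rooted at a node $r$, in which every node either has degree at most $d(w+1)$ or is a secondary copy of degree at most $3$. I would then define $\cX=(X_t)_{t\in T}$ by letting $X_t$ be the set of vertices $v\in V(G)$ for which $t$ is the node closest to $r$ with $v\in W_t$; equivalently, $X_t=W_t\setminus\bigcup\{W_{t'}:t'\text{ a proper ancestor of }t\}$. Then $\cX$ is a near-partition of $V(G)$ with $X_t\subseteq W_t$ for all $t$, and $X_t=\emptyset$ whenever $t$ is a secondary copy, since then the corresponding $c_0$ (carrying the same bag) is a proper ancestor of $t$. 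Hence $(T,\cX)$ is a \tcd\ of $G$, and it remains to bound its adhesion and the sizes of its torsos.

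For the adhesion, I would fix an edge $t_1t_2$ of $T$, write $T_i$ for the component of $T-t_1t_2$ containing $t_i$ and $Y_i:=\bigcup_{t\in T_i}X_t$, and set $S:=W_{t_1}\cap W_{t_2}$, so that $|S|\le|W_{t_1}|\le w+1$. The key observation is that every edge of the adhesion set $E_G(Y_1,Y_2)$ is incident to $S$: if $uv\in E(G)$ with $u\in Y_1$ and $v\in Y_2$, then some bag $W_s$ contains both $u$ and $v$, and if $s\in T_1$ then the subtree of all nodes whose bag contains $v$ meets both $T_1$ (at $s$) and $T_2$ (at the node whose part contains $v$), hence contains the edge $t_1t_2$, so that $v\in W_{t_1}\cap W_{t_2}=S$; the case $s\in T_2$ is symmetric and gives $u\in S$. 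Therefore $|E_G(Y_1,Y_2)|\le\sum_{x\in S}d_G(x)\le(w+1)d\le(2w+2)d$, so $(T,\cX)$ has adhesion at most $(2w+2)d$.

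For the torso sizes, I would fix $t\in T$ and note that the torso of $(T,\cX)$ at $t$ has core vertices exactly $X_t$, of which there are at most $|W_t|\le w+1$, together with one peripheral vertex for each component of $T-t$, that is, $\deg_T(t)$ of them. If $t$ is a secondary copy, then $X_t=\emptyset$ and $\deg_T(t)\le 3\le(d+1)(w+1)$ (using $d,w\ge 1$); otherwise $\deg_T(t)\le d(w+1)$, so the torso has at most $|W_t|+\deg_T(t)\le(w+1)+d(w+1)=(d+1)(w+1)$ vertices. In either case the torso at $t$ has at most $(d+1)(w+1)$ vertices, which completes the proof.

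The one step that requires care is the reshaping of the host tree in the first paragraph: the passage from a tree-decomposition to a \tcd\ via first-appearance is otherwise entirely routine, but a bag shared by many branches of the host tree would produce a torso with uncontrollably many peripheral vertices, and the actual work lies in shrinking the degree of the host tree without inflating either the bags or the adhesion sets. Replacing such a bag by a path of copies of itself does exactly this at no cost: the copies all carry the same bag, so the width does not change, and an edge of $T$ joining two copies of a bag $W_t$ still has both endpoints inside $W_t$, so the adhesion estimate above applies to it unchanged.
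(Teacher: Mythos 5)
The paper does not give its own proof of this lemma; it cites it verbatim from Wollan's paper (\cite{WollanImmersion}*{Lemma 12}), so there is no paper-local argument to compare against. Judged on its own, your proof is correct and is essentially the standard argument one would expect for this conversion (and, as far as I can tell, the same route that Wollan takes): take a width-$w$ tree-decomposition, subdivide high-degree nodes of the host tree into paths of copies carrying the same bag, and then pass to a \tcd\ by the first-appearance rule. All the delicate points are handled: you verify that the secondary copies get empty parts (so they contribute no core vertices), that the tree-decomposition axioms survive the path-replacement because all copies share a bag, and that edges between two copies of a bag still have their separator inside that bag so the adhesion estimate covers them. The adhesion argument via the intersection bag $S = W_{t_1}\cap W_{t_2}$ actually gives the sharper bound $(w+1)d$ rather than $(2w+2)d$; the cited constant likely comes from bounding edges incident to $W_{t_1}\cup W_{t_2}$ instead, but your bound of course implies the stated one. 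The only place where I would add a sentence is the claim that ``the three tree-decomposition axioms survive'' the reshaping: for a vertex $v\in W_t$ the copies $c_0,\dots,c_k$ form a path all containing $v$, and each former neighbour of $t$ is attached to exactly one copy, so the set of nodes whose bag contains $v$ remains connected; for $v\notin W_t$ that node set lies entirely in one component of $T_0-t$ and is untouched. This is routine, but since it is the one structural manipulation doing real work, spelling it out is worthwhile.
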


The next lemma says that a large enough set~$U$ of vertices in a connected graph is either reflected in a subdivision of a star with many leaves in~$U$ or a comb with many teeth in~$U$. 
Here, a \emph{comb} is a union of a path~$P$ with disjoint, possibly trivial, paths which have precisely their first vertex on~$P$, and we call the last vertices of these paths their \emph{teeth}.
Note that if a graph has small maximum degree, then the graph does not contain any large subdivided star.

\newcommand{\starcomb}[2]{h({#1},{#2})}%
\begin{lemma}[\cite{burger2022duality}*{Lemma 6.2}] \label{lem:FiniteStarComb}
    For every two integers~$s, t > 0$, there exists an integer~$\starcomb{s}{t} > 0$ such that the following holds:
    Whenever a set~$U$ of vertices of a connected graph~$G$ has size at least~$\starcomb{s}{t}$, then there is either some subdivision of a star in~$G$ with all its~$s$ leaves in~$U$ or a comb in~$G$ with all its~$t$ teeth in~$U$.
\end{lemma}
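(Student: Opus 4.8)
The plan is to reduce the statement to one about trees and then locate the star or the comb inside a cleverly chosen subtree. First I would pass to a minimal connected subgraph $T\subseteq G$ with $U\subseteq V(T)$; since $G$ is connected such a $T$ exists, and it is a tree whose every leaf lies in $U$ by minimality. Then I would suppress every vertex of $T$ of degree~$2$ that does not lie in $U$ (no loops arise as $T$ is a tree), obtaining a tree $T^{*}$ with $U\subseteq V(T^{*})$, so that $|V(T^{*})|\ge|U|$, and in which every vertex either lies in $U$ or has degree at least~$3$; in particular every leaf of $T^{*}$ lies in $U$. Any subdivided star or any path found in $T^{*}$ lifts, by reinserting the suppressed degree-$2$ vertices, to one of the same kind in $T\subseteq G$ with exactly the same leaves, branch vertices and interior, so from now on I would work entirely inside $T^{*}$. (The cases $s\le 2$ are trivial --- any path between two vertices of $U$ is a subdivided star with at most $2$ leaves in $U$ --- so I would set $h(s,t):=2$ there and assume $s\ge 3$ below.)

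Next I would branch on the maximum degree of $T^{*}$. If some vertex $v$ has degree at least~$s$ in $T^{*}$, then choosing a leaf of $T^{*}$ in each of $s$ distinct components of $T^{*}-v$ and joining it to $v$ within that component produces a subdivision of $K_{1,s}$ with all $s$ leaves in $U$, and we are done. Otherwise $T^{*}$ has maximum degree at most $s-1$; rooting it at any vertex, at most $(s-1)^{i}$ vertices lie at distance $i$ from the root, so $|V(T^{*})|\le(s-1)^{D+1}$ where $D$ is the depth, and hence $T^{*}$ contains a path on $m:=D+1\ge\log_{s-1}|V(T^{*})|\ge\log_{s-1}|U|$ vertices. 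I would set $h(s,t):=(s-1)^{2t+4}$, which forces $m\ge 2t+4$.

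It remains to turn this long path $P^{*}$ into a comb. Each vertex of $P^{*}$ lies in $U$ or is a branch vertex of $T$, so at least $\lceil m/2\rceil\ge t$ of its vertices lie in $U$, or at least $\lceil m/2\rceil$ of them are branch vertices of $T$, of which at least $\lceil m/2\rceil-2\ge t$ are interior to $P^{*}$. Reinserting the suppressed vertices turns $P^{*}$ into a path $P$ of $T$ containing the same vertices of $U$ and the same interior branch vertices, since the reinserted vertices are neither in $U$ nor branch vertices. In the first case, $P$ together with a trivial pendant path at each of its $\ge t$ vertices of $U$ is a comb with $\ge t$ teeth in $U$. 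In the second case, each branch vertex $b$ of $T$ interior to $P$ has two incident edges on $P$ and at least one incident edge leaving $V(P)$ into a component of $T-V(P)$; in a tree each such component attaches to $P$ at a single vertex, so distinct branch vertices get distinct components, and each component, being a finite subtree of $T$, contains a leaf of $T$, hence a vertex of $U$. Hanging off $P$, at each of the $\ge t$ chosen branch vertices, a path through its component to such a leaf gives pairwise disjoint pendant paths meeting $P$ only in their first vertex; this is a comb with $\ge t$ teeth in $U$.

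The one genuinely delicate point is this last case: one must check that the teeth can be chosen pairwise disjoint and ending in $U$, and this is exactly where the tree structure is used --- each component of $T-V(P)$ attaches to $P$ at a unique vertex (so the teeth do not collide), and every finite subtree of $T$ contains a leaf, which by minimality of $T$ lies in $U$ (so the teeth can end in $U$). Everything else is bookkeeping; the resulting bound $h(s,t)=(s-1)^{2t+4}$ is polynomial in $s$ and exponential in $t$, and the exponential dependence on $t$ is necessary, as complete $(s-1)$-ary trees show.
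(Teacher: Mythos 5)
The paper itself does not prove this lemma; it imports it verbatim from Bürger and Carmesin \cite{burger2022duality}*{Lemma~6.2}, so there is no in-paper proof to compare against. Judged on its own, your argument is correct and follows what is essentially the standard finite star--comb argument: pass to a minimal tree $T$ spanning $U$ (so its leaves lie in $U$), suppress degree-$2$ vertices outside $U$ so that every vertex of the resulting tree $T^{*}$ is in $U$ or has degree at least~$3$, and then split on whether $T^{*}$ has a vertex of degree at least~$s$ (giving the subdivided star) or, having bounded degree, has at most $(s-1)^{D+1}$ vertices and hence a root-to-leaf path on at least $2t+4$ vertices when $|U|\geq (s-1)^{2t+4}$. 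The endgame is also handled correctly: along that path, either at least $t$ of its vertices are in $U$ (a comb with trivial teeth), or at least $t$ interior vertices are branch vertices of $T$, and the components of $T-V(P)$ hanging off them are pairwise distinct (each attaches to a unique vertex of $P$ because $T$ is acyclic) and each contains a leaf of $T$, hence a vertex of $U$, giving the $t$ disjoint teeth.

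Two small polish remarks, neither of which is a gap. First, in the bounded-degree case the cleaner count is that level $i\geq 1$ has at most $(s-1)(s-2)^{i-1}$ vertices, which is dominated by your $(s-1)^{i}$ for $s\geq 3$; the resulting bound $h(s,t)=(s-1)^{2t+4}$ is fine. Second, when you pigeonhole the $m\geq 2t+4$ path vertices into ``in $U$'' versus ``branch vertex of $T$'', these two classes may overlap; this is harmless, since you only need that at least one of the two classes has size at least $\lceil m/2\rceil$, which follows because their union is all of $V(P^{*})$.
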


As a final ingredient to our proof of~\cref{main:NoWallImpliesAlphaThin}, the following lemma asserts that if a simple graph does not contain a star as a minor, then it is `close' to a union of vertex-disjoint paths.
\begin{lemma}[\cite{marx2014immersions}*{Lemma 20}] \label{lem:ExcludedStar}
    Let~$k > 0$ be an integer.
    If a simple connected graph~$G$ does not contain~$K_{1, k}$ as a minor, then~$G - X$ is a vertex-disjoint union of paths for some set~$X$ of at most~$4k$ vertices of~$G$.
\end{lemma}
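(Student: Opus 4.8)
The plan is to peel off the few vertices that carry all the ``non-path-like'' structure, using a depth-first search tree. Throughout, $G$ is simple and connected, and I may assume $k\ge 2$ and $|V(G)|\ge 3$, the remaining cases being trivial. Two elementary observations will be used repeatedly. First, since $G$ is simple, a vertex of degree $\ge k$ has $k$ distinct neighbours, so together with that vertex they form a model of $K_{1,k}$; more generally, if $C\subseteq G$ is connected then at most $k-1$ vertices outside $C$ have a neighbour in $C$ (else $k$ of them, as singleton branch sets, together with $C$ as the centre model, yield a $K_{1,k}$-minor). Second: \emph{a tree with $\ell\ge 3$ leaves contains $K_{1,\ell}$ as a minor}. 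Indeed, deleting its leaves leaves a nonempty connected subtree $T^-$, each leaf is adjacent to $T^-$, and no two leaves are adjacent (else $T=K_2$), so $T^-$ as centre together with the $\ell$ leaves as singleton branch sets is a $K_{1,\ell}$-model. In particular, every spanning tree of $G$ has at most $\max(2,k-1)\le k$ leaves, hence, once rooted, at most $k-1$ vertices with two or more children.

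Next I fix a depth-first search spanning tree $\mathcal T$ of $G$ rooted at an arbitrary vertex, so that every non-tree edge joins a vertex to a proper ancestor. Let $B$ be the set of vertices with at least two children in $\mathcal T$; by the last remark $|B|\le k-1$, and $\mathcal T-B$ is a disjoint union of paths, being a rooted forest in which every vertex has at most one child. The crucial claim is that the set $W$ of endpoints of non-tree edges satisfies $|W|\le 3k+1$. Granting this, put $X:=B\cup W$; then $|X|\le (k-1)+(3k+1)=4k$, and $G-X$ contains no non-tree edge, so $G-X\subseteq\mathcal T-X\subseteq\mathcal T-B$, which is a disjoint union of paths. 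Hence $G-X$ is a disjoint union of paths, as required.

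The bound on $|W|$ is the heart of the matter, and where I expect the real work to lie (note that the number of non-tree edges itself is \emph{not} $O(k)$ --- think of $K_k$ --- so one must count the vertices they touch, not the edges). The idea is that, $\mathcal T$ having few leaves, $\mathcal T$ is already ``path-like'', so if many vertices met a non-tree edge one could build a $K_{1,k}$-minor. To make this precise I would pass to the topological reduction $H$ of $G$ (take the $2$-core, then suppress all degree-$2$ vertices, keeping the resulting multigraph and discarding loops): $H$ is a minor of $G$, hence $K_{1,k}$-minor-free, and has minimum degree at least $3$. Interpreting the edges of $H$ back in $G$ --- parallel edges as internally disjoint paths, loops as pendant cycles, using simplicity of $G$ to force length $\ge 2$ whenever the multiplicity exceeds $1$, and using that a spanning tree of $H$ has at most $k$ leaves --- one shows that no connected subgraph of $G$ can ``absorb'' more than $O(k)$ of these features without acquiring $k$ neighbours, which ultimately bounds the number of vertices of $G$ that meet a non-tree edge of $\mathcal T$. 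The delicate points, which I expect to be the main obstacle, are choosing the relevant connected subgraph as a ``spread-out'' part of the tree rather than an interval (so that the non-tree edges genuinely contribute new neighbours instead of being swallowed by the centre), and bounding parallel edges and pendant cycles globally rather than one vertex-pair at a time; this is what eventually forces the explicit constant into the range needed for $|B|+|W|\le 4k$.
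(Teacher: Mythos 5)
The paper does not prove this lemma; it cites it from Marx--Wollan, so there is no ``paper's own proof'' to compare against, and your attempt has to stand on its own. Your framing is sound: fix a DFS tree $\mathcal T$, let $B$ be the branch vertices (so $|B|\le k-1$ because any spanning tree of a $K_{1,k}$-minor-free graph has at most $\max(2,k-1)$ leaves, and $\mathcal T-B$ is a disjoint union of paths), and let $W$ be the set of endpoints of non-tree edges; if $|W|\le 3k+1$, then $X:=B\cup W$ works. All of this is correct.

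The problem is that you never prove $|W|\le 3k+1$, and this is the entire technical content of the lemma. You explicitly say the bound on $|W|$ is ``where I expect the real work to lie'' and that the reduction via the topological core $H$ has ``delicate points'' that you ``expect to be the main obstacle.'' That is an accurate self-assessment: the sketch does not close. The fact that $H$ is $K_{1,k}$-minor-free, has minimum degree $\ge 3$, and has a spanning tree with few leaves tells you $H$ has $O(k)$ vertices and edges; but passing from that to a bound on $|W|$ in $G$ requires controlling, for each subdivided edge of $H$, how many vertices on the corresponding path of $G$ can be incident to back edges, and likewise how many distinct back edges can run between two such paths. Exactly this counting is what you only gesture at. Until you show, concretely, that a long tree path carrying many back-edge endpoints forces a connected subgraph of $G$ with $\ge k$ neighbours, the argument is incomplete; and since the number of back \emph{edges} can genuinely be $\Theta(k^2)$ (as you note), there is no shortcut to the vertex count. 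As submitted the proof has a genuine gap at its central step.
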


\subsection{Proof of~\texorpdfstring{\cref{main:NoWallImpliesAlphaThin}}{Theorem 1}}\label{subsec:proofofnowallimpliesalphathin}

\begin{proof}[Proof of~\cref{main:NoWallImpliesAlphaThin}]
    We will define the integer~$\alpha(\ell) > 0$ explicitly in terms of~$\ell$ in what follows.
    Let~$G$ be a graph in which the wall~$W_\ell$ is not strongly immersed.
    By~\cref{lem:Reduction3EdgeConn}, we may assume that~$G$ is~$3$-{\edgecon}.    
    
    To construct the desired {\tcd}~$(T, \cX)$ of~$G$, we will first apply~\cref{lem:BoundedAdhesionAndTorsosViaTreewidthAndDegree} to an appropriate minor~$G'$ of~$G$ in order to obtain an auxiliary {\tcd}~$(T, \cX')$ of~$G'$ from which we then construct~$(T, \cX)$.
    This minor~$G'$ of~$G$ is defined as follows:
    \newcommand{\paralleledges}[1]{p({#1})}%
    Let~$A$ be the simple graph on~$V(G)$ with an edge joining two vertices if there are more than~$\paralleledges{\ell} := 6 \ell^2$ edges joining them in~$G$.
    The graph~$G'$ then arises from~$G$ by contracting the components of~$A$, keeping parallel edges as they arise (omitting potentially arising loops).

    Now~$G'$ inherits two key properties of~$G$, which we will need in what follows.
    First, $G'$ is still~$3$-{\edgecon} as it is a minor of the~$3$-{\edgecon} graph~$G$.
    Secondly, $G'$ does still not contain the wall~$W_\ell$ as a strong immersion minor.
    Indeed, $G$ does not contain~$W_\ell$ as a strong immersion minor, and since~$\paralleledges{\ell} \geq 3$ and~$W_\ell$ has maximum degree~$3$, one could greedily construct a strong immersion of~$W_\ell$ in~$G$ from a strong immersion of~$W_\ell$ in~$G'$.
    
    In order to apply \cref{lem:BoundedAdhesionAndTorsosViaTreewidthAndDegree} to~$G'$, we next bound its tree-width and its maximum degree in terms of~$\ell$.
    \begin{claim} \label{cl:TreewidthBound}
        The tree-width of~$G'$ is at most~$\treewidthbound{\ell}$, the integer given by~\cref{lem:NoWallAsSubdivisionYieldsBoundedTW}.
    \end{claim}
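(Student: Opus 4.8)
The plan is to deduce this straight from the Grid Theorem \cref{lem:NoWallAsSubdivisionYieldsBoundedTW}: by that theorem, to bound the tree-width of~$G'$ by~$\treewidthbound{\ell}$ it suffices to check that~$G'$ does \emph{not} contain a subdivision of the wall~$W_\ell$. Since we have already observed that~$W_\ell$ is not a strong immersion minor of~$G'$, the claim thus reduces to the elementary implication that a topological minor is in particular a strong immersion minor.

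Concretely, I would argue by contradiction. Suppose~$G'$ contains a subdivision~$S$ of~$W_\ell$ as a subgraph. Define a map~$\alpha$ on~$V(W_\ell) \cup E(W_\ell)$ by sending each vertex of~$W_\ell$ to its branch vertex in~$S$, and each edge~$uv \in E(W_\ell)$ to the~$\alpha(u)$--$\alpha(v)$ path in~$S$ that arises from~$uv$ by subdivision. Distinct such paths are edge-disjoint, as they partition~$E(S)$, and the internal vertices of each of them are subdivision vertices of~$S$, hence not branch vertices and in particular not contained in~$\alpha(V(W_\ell))$. Thus~$\alpha$ is a strong immersion of~$W_\ell$ in~$S \subseteq G'$, contradicting that~$W_\ell$ is not strongly immersed in~$G'$. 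Consequently~$G'$ contains no subdivision of~$W_\ell$, and \cref{lem:NoWallAsSubdivisionYieldsBoundedTW} gives that the tree-width of~$G'$ is at most~$\treewidthbound{\ell}$.

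I do not expect any real obstacle here: the step is a routine combination of the Grid Theorem with the observation ``topological minor~$\Rightarrow$ strong immersion minor''. The only point worth a moment's thought is that~$G'$ is a multigraph rather than a simple graph, but this causes no difficulty, since a subdivision of the simple graph~$W_\ell$ uses at most one edge from any bundle of parallel edges of~$G'$. (Note that the parameter~$\paralleledges{\ell} = 6\ell^2$ built into the definition of~$G'$ plays no role in this particular claim; it will be used only afterwards, when bounding the maximum degree of~$G'$ and when reconstructing~$(T, \cX)$ from the auxiliary decomposition~$(T, \cX')$.)
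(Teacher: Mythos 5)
Your proof is correct and follows exactly the route the paper takes: note that a subdivision of~$W_\ell$ would in particular give a strong immersion of~$W_\ell$ in~$G'$, which has been ruled out, and then invoke the Grid Theorem. You simply spell out the well-known implication ``topological minor $\Rightarrow$ strong immersion minor'' in more detail than the paper does.
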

    \begin{claimproof}
        Since~$G'$ does not contain~$W_\ell$ as a strong immersion minor, in particular it does not contain a subdivision of~$W_\ell$.
        Therefore,~\cref{lem:NoWallAsSubdivisionYieldsBoundedTW} yields the desired upper bound~$\treewidthbound{\ell}$ on the tree-width of~$G'$.
    \end{claimproof}
    
    \newcommand{\maxdegreebound}[1]{d({#1})}
    \begin{claim} \label{cl:MaxDegreeBound}
        There exists an integer~$\maxdegreebound{\ell}$ such that~$G'$ has maximum degree~$\maxdegreebound{\ell}$.
    \end{claim}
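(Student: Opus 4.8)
The plan is to bound the degree of every vertex of $G'$ by combining the two inherited properties of $G'$: it is $3$-edge-connected and it does not contain $W_\ell$ as a strong immersion minor. Since $G'$ is $3$-edge-connected, Lemma~\ref{cor:DegreeIsBounded} yields an integer $\maxneighbourhood{\ell}$ such that $G'$ has fewer than $\maxneighbourhood{\ell}$ \emph{neighbours} at every vertex. So the only way a vertex $v \in G'$ could have large \emph{degree} is by being incident to many parallel edges to a single neighbour. The whole point of passing from $G$ to $G'$ was to preclude exactly this: by construction of the simple graph $A$ and the contraction of its components, any two vertices of $G'$ are joined by at most $\paralleledges{\ell} = 6\ell^2$ parallel edges. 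Indeed, if two vertices $u',v'$ of $G'$ were joined by more than $6\ell^2$ edges, these edges would come from edges of $G$ between the two contracted components $U',V'$, and then (since $U'$ and $V'$ are connected in $A$) one could find two vertices $u \in U'$, $v \in V'$ joined by more than $\paralleledges{\ell}$ edges in $G$ — wait, that is not quite immediate, since the $6\ell^2$-plus edges could be spread across many pairs.

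So the first genuine step is to handle this subtlety. I would argue as follows. Suppose $v' \in G'$ has high degree; say $v'$ is the contraction of a component $C$ of $A$. Each edge incident to $v'$ in $G'$ corresponds to an edge of $G$ with exactly one endpoint in $C$. Group these edges of $G$ by their endpoint outside $C$: for a vertex $w \notin C$, write $d_w$ for the number of edges of $G$ between $C$ and $w$, so that $d_{G'}(v') = \sum_w d_w$, the sum ranging over $w$ in the at most $\maxneighbourhood{\ell} - 1$ neighbours of $v'$ in $G'$. If for some such $w$ we had $d_w > \paralleledges{\ell} \cdot |C|$, then by pigeonhole some single vertex $u \in C$ would satisfy $e_G(u,w) > \paralleledges{\ell}$, forcing $uw \in E(A)$; but $u \in C$ and $w \notin C$ contradicts that $C$ is a component of $A$ (if $u \in C$ and $uw$ is an $A$-edge then $w \in C$). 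Hence $d_w \le \paralleledges{\ell}\,|C|$ for every $w$. This still leaves $|C|$ unbounded, so I also need a bound on $|C|$. For that I observe that $C$, as a component of the simple graph $A$, is connected, and that a spanning tree of $A[C]$ uses $|C| - 1$ edges each of which is a pair joined by more than $\paralleledges{\ell} \ge 3$ edges in $G$; alternatively — and more cleanly — I would bound $|C|$ directly via Lemma~\ref{lem:PathWithApexContainsWall} or Lemma~\ref{lem:SpiderContainsAll}: if $|C|$ is too large, the dense connected structure inside $C$ (each $A$-edge is a triple edge in $G$) together with the contracted neighbour gives enough edge-disjoint paths to build a strong immersion of $W_\ell$ in $G$, contradicting the hypothesis on $G$. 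Concretely, $A[C]$ being a connected simple graph on $|C|$ vertices, if $|C|$ is large it contains either a long path or a vertex of large degree; in either case, replacing each $A$-edge by three parallel edges of $G$ yields a subdivision of $S_{3,k}$ or of $P_{3k-1} * v$ for $k$ large, hence $W_\ell$ as a strong immersion minor of $G$ by Lemmas~\ref{lem:SpiderContainsAll} and~\ref{lem:PathWithApexContainsWall}.

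Combining, I set $\maxdegreebound{\ell} := (\maxneighbourhood{\ell} - 1)\cdot \paralleledges{\ell} \cdot c(\ell)$, where $c(\ell)$ is the bound on $|C|$ from the previous paragraph, and conclude $d_{G'}(v') < \maxdegreebound{\ell}$ for every $v'$. The main obstacle, as indicated, is the second half: bounding the size $|C|$ of a component of $A$. This requires turning a large tree-like or star-like dense subgraph of $G$ inside one component into an explicit strong immersion of $W_\ell$; the bookkeeping — choosing which three parallel copies to use along each branch so that all the resulting paths are edge-disjoint and internally avoid the branch vertices — is exactly the kind of greedy construction carried out in the proofs of Lemmas~\ref{lem:SpiderContainsAll} and~\ref{lem:PathWithApexContainsWall}, so it should go through, but it is where the real work lies. (The first half, bounding the number of neighbours, is immediate from Lemma~\ref{cor:DegreeIsBounded} once one knows $G'$ is $3$-edge-connected and $W_\ell$-free, both of which were already established.)
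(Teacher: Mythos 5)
Your first step is correct: by Lemma~\ref{cor:DegreeIsBounded} every vertex of~$G'$ has fewer than~$\maxneighbourhood{\ell}$ neighbours, so it suffices to bound the multiplicity between any two vertices of~$G'$, i.e.\ the number of $G$-edges between any two distinct components of~$A$. But your route from there has a genuine gap. You reduce to the pair of bounds $e_G(C,C') \le \paralleledges{\ell}\,|C|\,|C'|$ and ``$|C|\le c(\ell)$''. The second bound is false: a component~$C$ of~$A$ can be arbitrarily large. Take $G$ to consist of a long path $v_1 v_2 \cdots v_n$ in which each consecutive pair is joined by $6\ell^2+1$ parallel edges. Then $A$ is exactly this path, so $A$ has a single component $C$ with $|C|=n$ unbounded, yet $G$ is a ``thick path'' of bounded cutwidth and certainly does not contain a large wall as a strong immersion minor (e.g.\ by~\cref{main:AlphaThinImpliesNoWall} itself, or simply because any well-linked set of branch vertices would have to send many edge-disjoint paths across some cut of size $6\ell^2+1$). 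In particular your concluding assertion --- that a long path in $A[C]$, with each $A$-edge replaced by three parallel edges of $G$, ``yields a subdivision of $S_{3,k}$ or of $P_{3k-1}*v$'' --- is wrong: neither graph embeds in a path, since both contain a vertex of degree~$3k$.

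What is actually bounded is not $|C|$ but the number of $G$-edges between two fixed components $C, C'$ of~$A$, and proving that bound requires a different combinatorial input. The paper's argument is: if $e_G(C,C')$ is large, then (since any single pair $u\in C$, $w\in C'$ has at most $\paralleledges{\ell}$ parallel $G$-edges, by the definition of $A$) one of the attachment sets $N_G(C')\cap C$ or $N_G(C)\cap C'$ is large. The simple graph $A$ has bounded maximum degree (a large-degree vertex of $A$ would give $S_{3,2\ell^2}$ and hence $W_\ell$ by~\cref{lem:SpiderContainsAll}), so by the star--comb lemma (\cref{lem:FiniteStarComb}) a large attachment set forces a \emph{comb} in $A[C]$ with many teeth in $N_G(C')\cap C$. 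Only now is there a genuine apex: routing edge-disjoint paths from a fixed $v\in C'$ through $G[C']$ to the teeth of the comb produces a strong immersion of $P_{6\ell^2-1}*v$, hence of $W_\ell$ by~\cref{lem:PathWithApexContainsWall}. The apex comes from the \emph{other} component and the many edges between $C$ and $C'$, not from $|C|$ being large --- which is exactly the distinction your proposal misses.
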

    \begin{claimproof}
        We prove that~$G'$ has maximum degree at most~$\maxdegreebound{\ell} := \maxneighbourhood{\ell} \cdot \paralleledges{\ell} \cdot \starcomb{2 \ell^2}{6\ell^2}^2$, where~$\starcomb{2 \ell^2}{6\ell^2}$ and~$\maxneighbourhood{\ell}$ are the integers given by~\cref{lem:FiniteStarComb} and~\cref{cor:DegreeIsBounded}, respectively.
        Since~$G'$ is~$3$-edge-connected and does not contain~$W_\ell$ as strong immersion minor, \cref{cor:DegreeIsBounded} implies that every vertex of~$G'$ has less than~$\maxneighbourhood{\ell}$ neighbours in~$G'$.
        Thus, it is enough to show that there are at most~$\maxdegreebound{\ell} / \maxneighbourhood{\ell}$ many parallel edges joining any fixed pair of vertices.
        By the construction of~$G'$, this is the case if there are at most~$\maxdegreebound{\ell} / \maxneighbourhood{\ell}$ edges in~$G$ joining any two components of~$A$.
        
        Suppose for a contradiction that there are two components~$C$ and~$C'$ of~$A$ such that there are more than~$\maxdegreebound{\ell} / \maxneighbourhood{\ell} = \paralleledges{\ell} \cdot \starcomb{2 \ell^2}{6\ell^2}^2$ edges of~$G$ joining them.
        By the definition of~$A$, each vertex in~$C$ is joined to each vertex in~$C'$ by at most~$\paralleledges{\ell}$ parallel edges in~$G$.
        Thus, at least one of~$N_G(C) \cap C'$ and~$N_G(C') \cap C$ is of size at least~$\starcomb{2 \ell^2}{6\ell^2}$, and we may assume without loss of generality that it is~$N_G(C') \cap C$.
        
        Now the simple graph~$A$ has maximum degree less than~$2 \ell^2$.
        Indeed, since~$\paralleledges{\ell} \geq 3$, a vertex of degree at least~$2 \ell^2$ in~$A$ would immediately imply the existence of a strong immersion of~$W_\ell$ in~$G$ by~\cref{lem:SpiderContainsAll}.
        Thus, by~\cref{lem:FiniteStarComb}, $A[C]$ contains a comb with at least~$6 \ell^2$ teeth in~$N_G(C') \cap C$.
        
        Hence, the wall~$W_\ell$ is strongly immersed in~$G$ by~\cref{lem:PathWithApexContainsWall}, since~$G$ contains~$P_{6 \ell^2 - 1} * v$ as a strong immersion minor: 
        Fix a vertex~$v$ in~$C'$.
        Since~$\paralleledges{\ell} \geq 6 \ell^2$, we may pick greedily edge-disjoint paths from~$v$ to each tooth in~$N_G(C') \cap C$ such that all edges of each path except its last edge are contained in~$G[C']$.
        These paths together with the comb form a strong immersion of~$P_{6 \ell^2 - 1} * v$ in~$G$.
        
        All in all, the maximum degree of~$G'$ is at most~$\maxdegreebound{\ell}$, as desired.
    \end{claimproof}

    \newcommand{\adhesionbound}[1]{a({#1})}%
    \newcommand{\torsobound}[1]{k({#1})}%
    By~\cref{cl:TreewidthBound} and~\cref{cl:MaxDegreeBound}, we can now apply~\cref{lem:BoundedAdhesionAndTorsosViaTreewidthAndDegree} to~$G'$ and obtain a {\tcd}~$(T,\cX')$ of~$G'$ of adhesion at most~$\adhesionbound{\ell} := (2 \treewidthbound{\ell} + 2) \maxdegreebound{\ell}$ such that every torso of~$(T,\cX')$ has size at most~$\torsobound{\ell} := (\maxdegreebound{\ell} + 1)(\treewidthbound{\ell} + 1)$.
    From the {\tcd}~$(T, \cX')$ of the minor~$G'$ of~$G$, we then construct the {\tcd}~$(T, \cX)$ of~$G$ by defining, for each node~$t \in T$, its corresponding part~$X_t$ to be the union of the branch sets of the vertices in~$X_t'$.
    
    Now let
    \begin{equation*}
        \alpha = \alpha(\ell) := \max\{\torsobound{\ell}( 8 \ell^2 + 1 + 6 \ell^2 \cdot 16 \ell^4), p(\ell) \cdot (6 \ell^2)^2 + \binom{16 \ell^4}{2} \cdot \paralleledges{\ell} \cdot (6 \ell^2)^2 + \torsobound{\ell} \cdot \maxdegreebound{\ell}/2 + \adhesionbound{\ell} \cdot \torsobound{\ell}\}.
    \end{equation*}
    \noindent We show that~$(T, \cX)$ has the desired properties in that it has adhesion at most~$\alpha$ and the~$3$-centres of its torsos are~\almostalphathin.
    In doing so, the first term in the maximum defining~$\alpha$ will be used to bound the size of the set of vertices that we will delete from the~$3$-centre of a torso to make it~\alphathin\ as well as the number of its neighbours, while the second term in the maximum will be used to show that the remainder of the~$3$-centre is \alphathin.
    
    \begin{claim} \label{cl:AdhesionBound}
       The {\tcd}~$(T, \cX)$ has adhesion at most~$\adhesionbound{\ell} \le \alpha$.
    \end{claim}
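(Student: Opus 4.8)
The plan is to compare the adhesion sets of $(T, \cX)$ edge by edge with those of the auxiliary {\tcd}~$(T, \cX')$, which we already know has adhesion at most~$\adhesionbound{\ell}$. Concretely, I would fix an edge~$t_1 t_2 \in T$, let~$T_1 \ni t_1$ and~$T_2 \ni t_2$ be the components of~$T - t_1 t_2$, and set~$Y_i := \bigcup_{t \in T_i} X_t$ and~$Y_i' := \bigcup_{t \in T_i} X_t'$ for~$i = 1, 2$, so that the adhesion sets induced by~$t_1 t_2$ in~$(T, \cX)$ and~$(T, \cX')$ are~$E_G(Y_1, Y_2)$ and~$E_{G'}(Y_1', Y_2')$, respectively. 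By the construction of~$(T, \cX)$ from~$(T, \cX')$, each~$Y_i$ is exactly the union of the branch sets of the vertices of~$G'$ lying in~$Y_i'$; since these branch sets are precisely the components of~$A$, the set~$Y_i$ is the union of those components of~$A$ that were contracted to vertices of~$Y_i'$. In particular~$Y_1$ and~$Y_2$ are each a union of whole components of~$A$, and they are disjoint.

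The key step is to check that the contraction of the components of~$A$ restricts to a bijection between~$E_G(Y_1, Y_2)$ and~$E_{G'}(Y_1', Y_2')$. Since~$G'$ arises from~$G$ by contracting each component of~$A$ while keeping parallel edges, the contraction is a bijection from the non-loop edges of~$G$ onto~$E(G')$, where an edge~$uv \in E(G)$ becomes a (deleted) loop exactly when~$u$ and~$v$ lie in a common component of~$A$. No edge of~$E_G(Y_1, Y_2)$ is of this kind: one of its endvertices lies in~$Y_1$ and the other in~$Y_2$, and as~$Y_1$ and~$Y_2$ are disjoint unions of whole components of~$A$, these two endvertices lie in distinct components. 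Hence every edge of~$E_G(Y_1, Y_2)$ maps to an edge of~$G'$ between~$Y_1'$ and~$Y_2'$, and distinct edges map to distinct edges. Conversely, because~$Y_i$ is the union of the branch sets of~$Y_i'$, every edge of~$E_{G'}(Y_1', Y_2')$ is the image of an edge of~$G$ with one endvertex in~$Y_1$ and the other in~$Y_2$. Therefore~$|E_G(Y_1, Y_2)| = |E_{G'}(Y_1', Y_2')| \leq \adhesionbound{\ell}$.

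Taking the maximum over all edges of~$T$, this shows that~$(T, \cX)$ has adhesion at most~$\adhesionbound{\ell}$, and~$\adhesionbound{\ell} \leq \adhesionbound{\ell} \cdot \torsobound{\ell} \leq \alpha$ since~$\torsobound{\ell} = (\maxdegreebound{\ell} + 1)(\treewidthbound{\ell} + 1) \geq 1$ and~$\adhesionbound{\ell} \cdot \torsobound{\ell}$ appears as a summand in the second term of the maximum defining~$\alpha$. I do not expect a genuine obstacle in this claim; the only point that needs care is verifying that no edge of~$G$ crossing the separation~$\{Y_1, Y_2\}$ degenerates to a loop under the contraction, which is exactly why it matters that~$Y_1$ and~$Y_2$ are unions of complete components of~$A$ rather than arbitrary vertex sets — and this is automatic from how the parts~$X_t$ were defined.
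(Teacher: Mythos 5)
Your proof is correct and takes essentially the same approach as the paper: where the paper simply asserts that the adhesion sets of $(T,\cX)$ and $(T,\cX')$ coincide by construction, you spell out why (the contraction restricts to a bijection on cross-edges because each side of the separation is a union of whole components of $A$, so no crossing edge collapses to a loop). This is a more detailed rendering of the same observation, not a different route.
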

    \begin{claimproof}
        By the construction of~$(T, \cX)$ from~$(T, \cX')$, the adhesion sets of~$(T, \cX)$ are the same as those of~$(T, \cX')$.
        Thus, $(T, \cX)$ has the same adhesion as~$(T, \cX')$ and, in particular, adhesion at most~$\adhesionbound{\ell}$.
    \end{claimproof}

    \begin{claim} \label{cl:AlmostAlphaThin}
        The~$3$-centres of the torsos of~$(T, \cX)$ are~\almostalphathin.
    \end{claim}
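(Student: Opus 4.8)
The plan is to fix a node $t\in T$ and prove that the $3$-centre $\bar H$ of the torso $H$ of $(T,\cX)$ at $t$ is~\almostalphathin. The first step is to record the coarse structure of $\bar H$. Since the torso $H'$ of $(T,\cX')$ at $t$ has at most $\torsobound{\ell}$ vertices and $(T,\cX)$ is obtained from $(T,\cX')$ by replacing every vertex of $G'$ by its branch set, the core part $X_t$ of $H$ is a union of at most $\torsobound{\ell}$ \emph{clusters}, each cluster being a component $C$ of $A$, and $H$ has at most $\torsobound{\ell}$ peripheral vertices, each of which, by~\cref{cl:AdhesionBound}, has at most $\adhesionbound{\ell}$ neighbours in $H$, hence in $\bar H$. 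The $3$-centre construction performs at most $\torsobound{\ell}$ suppressions, each creating at most one edge. I would also recall the following facts: every vertex of $G$ has fewer than $\maxneighbourhood{\ell}$ neighbours by~\cref{cor:DegreeIsBounded}; two distinct clusters are joined by at most $\maxdegreebound{\ell}$ edges of $G$, since $G'$ has maximum degree $\maxdegreebound{\ell}$; and two vertices in a common cluster are joined by more than $\paralleledges{\ell}=6\ell^2$ edges of $G$ if they are adjacent in $A$, and by at most $\paralleledges{\ell}$ edges otherwise.

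The heart of the argument is a statement about a single cluster $C$: the simple connected graph $A[C]$ contains no $K_{1,16\ell^4}$ minor. I would prove this by contradiction. A sufficiently large star minor of $A[C]$ yields a vertex $c_0$ in the centre branch set $D_0$, a spanning tree of $A[D_0]$, and for each leaf branch set an $A$-edge joining it to $D_0$. As every edge of $A[C]$ stands for more than $\paralleledges{\ell}=6\ell^2=3\cdot 2\ell^2$ parallel edges of $G$, one can route along this tree, greedily and edge-disjointly, a strong immersion of $S_{3,2\ell^2}$ in $G$ — and, in the comb-like sub-case where the star minor comes from a comb of $A[C]$, a strong immersion of $P_{6\ell^2-1}\ast v$ — contradicting the absence of $W_\ell$ by~\cref{lem:SpiderContainsAll} (respectively~\cref{lem:PathWithApexContainsWall}). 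Hence~\cref{lem:ExcludedStar}, applied to $A[C]$, yields a set $X_C$ of at most $4\cdot 16\ell^4=64\ell^4$ vertices such that $A[C]-X_C$ is a disjoint union of paths $P^C_1,\dots,P^C_{p_C}$.

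Now I would assemble the enumeration. Put $S:=\{\text{peripheral vertices of }\bar H\}\cup\bigcup_C X_C$; then $|S|\leq\torsobound{\ell}(1+64\ell^4)\leq\alpha$ by the first term in the definition of $\alpha$, and every vertex of $S$ has at most $\max\{\adhesionbound{\ell},\,\maxneighbourhood{\ell}+\torsobound{\ell}\}\leq\alpha$ neighbours in $\bar H$ (peripheral vertices by the above; a vertex of some $X_C$ keeps at most $\maxneighbourhood{\ell}$ of its $G$-neighbours and gains at most $\torsobound{\ell}$ from suppressions). Enumerate $V(\bar H-S)$ cluster by cluster, and within a cluster path by path, each $P^C_j$ in the order along the path. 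For a gap of this enumeration lying inside a single path $P^C_j$, no edge of $A[C]$ on $P^C_j$ crosses it (consecutive path vertices are consecutive in the enumeration, or separated only by the skipped vertex), so the edges crossing it are: the at most $\binom{\torsobound{\ell}}{2}\maxdegreebound{\ell}$ edges between distinct clusters, plus the at most $\torsobound{\ell}$ suppression edges (together within one term of $\alpha$); and the $G$-edges joining pairs of vertices of $C$ that are not adjacent in $A$. Bounding this last contribution is the crux, and here the wall-freeness of $G$ is used a second time: a large family of pairwise ``crossing'' such non-$A$-edges on a path $P^C_j$ of big bundles would, together with the path, contain a large ladder and hence force a strong immersion of $P_{6\ell^2-1}\ast v$, and so of $W_\ell$, in $G$ via~\cref{lem:PathWithApexContainsWall} (one may also invoke that the underlying simple graph of $G[C]$, containing no subdivision of $W_\ell$, has tree-width at most $\treewidthbound{\ell}$ by~\cref{lem:NoWallAsSubdivisionYieldsBoundedTW}). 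After moving a further bounded number of vertices into $S$ one concludes that the non-$A$-edges crossing any single gap number at most $\binom{16\ell^4}{2}\cdot\paralleledges{\ell}\cdot(6\ell^2)^2$, matching the second term of $\alpha$; so $\bar H-S$ is~\alphathin.

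I expect the main obstacle to be exactly this last step: the bare disjoint-path structure of $A[C]-X_C$ does \emph{not} by itself yield~\alphathin ness, because a non-$A$-edge joining two far-apart vertices of one path crosses many gaps. Ruling out the existence of many such ``long'' edges through a common gap — so that only boundedly many survive after a further bounded deletion into $S$ — is precisely where the combinatorics of combs, ladders, and~\cref{lem:PathWithApexContainsWall} has to be pushed. The remaining ingredients are routine bookkeeping: the adhesion bound is already~\cref{cl:AdhesionBound}, and the size and neighbour-degree bounds on $S$ as well as the inter-cluster and suppression-edge counts are checked directly against the two terms in the definition of $\alpha$.
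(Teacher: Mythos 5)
Your proposal follows the same overall strategy as the paper: the same minor~$G'$ obtained by contracting the components of~$A$, the same observation that each~$A[C]$ excludes a large star minor (enforced via~\cref{lem:SpiderContainsAll} and the abundance of parallel edges in a cluster), the same invocation of~\cref{lem:ExcludedStar} to break each cluster into a bounded deletion set~$X_C$ plus disjoint paths, and the same path-by-path enumeration of the torso. However, you flag the decisive step yourself and then do not close it, and the speculative routes you sketch for it do not work. The step in question is bounding, for a fixed gap~$v_i$ on a single path component~$P$ of~$A[C]-X_C$, the number of~$G$-edges between the two pieces~$P_1,P_2$ of~$P-v_i$. Your proposal gestures at ``pairwise crossing edges giving a ladder'' and at the tree-width bound from~\cref{lem:NoWallAsSubdivisionYieldsBoundedTW}, together with a ``further bounded deletion into~$S$'' that is never specified. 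Neither of these gives the bound: tree-width of a simple minor says nothing about the multiplicity of~$G$-edges across a cut, and a single long chord through the gap is not a family of pairwise crossing edges, so no ladder needs to appear. Moreover, the $3$-centres play no role; since~$G$ is $3$-edge-connected, every adhesion set of~$(T,\cX)$ has size at least~$3$, all peripheral vertices of a torso have degree at least~$3$, and each torso \emph{equals} its~$3$-centre, so the suppression-edge bookkeeping in your proposal is vacuous.

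The paper's argument for the gap you identified is direct and requires neither further deletion nor ladders. Any two vertices of~$P$ that are not adjacent in~$A$ are joined by at most~$\paralleledges{\ell}$ parallel edges of~$G$, and non-consecutive vertices of~$P$ are never $A$-adjacent (else one of them would have a non-path $A$-edge inside~$C\setminus X_C$, contradicting that $P$ is a component of $A[C]-X_C$). Hence if~$|E_G(P_1,P_2)|>\paralleledges{\ell}\cdot(6\ell^2)^2$, then one side, say~$P_2$, contains more than~$6\ell^2$ vertices that have a~$G$-neighbour in~$P_1$. Fix any~$v\in P_1$. Because consecutive vertices along the path~$P$ are joined by more than~$\paralleledges{\ell}\geq 6\ell^2$ parallel edges of~$G$, one can greedily route~$6\ell^2$ pairwise edge-disjoint~$v$--$P_2$ paths inside~$G[P_1\cup P_2]$, one to each of those~$6\ell^2$ vertices of~$P_2$, giving a strong immersion of~$P_{6\ell^2-1}\ast v$ and hence of~$W_\ell$ by~\cref{lem:PathWithApexContainsWall}, a contradiction. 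The same argument caps the edges between any two distinct path-components of~$A[C]-X_C$. The remaining classes (inter-cluster edges, edges at peripheral vertices) are handled exactly as you describe. So the gap is real but localised; replacing the ladder/tree-width speculation with this routing argument turns your sketch into a correct proof.
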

    \begin{claimproof}
        We first observe that the torsos of~$(T, \cX)$ equal their~$3$-centres:
        Every adhesion set of~$(T, \cX)$ has size at least~$3$, since~$G$ is~$3$-edge-connected.
        Therefore, all peripheral vertices of torsos of~$(T, \cX)$ have degree at least~$3$ in the respective torso.
        This implies that the~$3$-centre of a torso of~$(T, \cX)$ equals the torso itself.

        Given any node~$t \in T$, let~$H_t$ be the torso of~$(T, \cX)$ at~$t$.
        We first find a set~$X$ of vertices of~$H_t$ such that~$A[X_t] - X$ is a disjoint union of paths, and we show that both the size of~$X$ and the size of its neighbourhood in~$H_t$ are bounded by our suitably chosen~$\alpha = \alpha(\ell)$.
        In a next step, we then use the structure of~$A[X_t] - X$ to exhibit a natural enumeration of~$V(H_t - X)$, and we finally show that this enumeration witnesses the~\alphathin ness of~$H_t - X$.
        
        For every component~$C$ of~$A[X_t]$, and more generally for every component~$C$ of~$A$, the graph~$A[C]$ does not contain the star~$K_{1, 2 \ell^2}$ as a minor.
        To prove this, we show that~$G[C]$ otherwise contains~$W_\ell$ as a strong immersion minor, contradicting our assumptions on~$G$.
        By~\cref{lem:SpiderContainsAll}, it is enough to show that~$S_{3,2 \ell^2}$ is strongly immersed in~$G[C]$.
        So let~$U \subseteq C$ be a set of vertices containing precisely one vertex of each branch set of the~$K_{1, 2\ell^2}$-minor of~$A[C]$.
        We then map the centre of~$S_{3,2 \ell^2}$ to the vertex of~$U$ in the branch set of the centre of~$K_{1,2\ell^2}$, and map the rest of the vertices of~$S_{3,2 \ell^2}$ bijectively to the remaining ones in~$U$.
        Since~$\paralleledges{\ell} \geq 6 \ell^2$, we can now greedily embed the edges of~$S_{3,2 \ell^2}$ as edge-disjoint paths in~$G[C]$ joining the vertices in~$U$ along the~$K_{1,2\ell^2}$-minor.
        Hence, we have found our desired strong immersion of~$S_{3,2 \ell^2}$ in~$G[C]$.
        
        We are now ready to find the desired set~$X$ of vertices of the torso~$H_t$ of~$(T, \cX)$ at~$t$.
        For every component~$C$ of~$A[X_t]$, the graph~$A[C]$ does not contain~$K_{1, 2\ell^2}$ as a minor. So~\cref{lem:ExcludedStar} yields a set~$X_C \subseteq C$ of size at most~$8 \ell^2$ such that~$A[C] - X_C$ is a disjoint union of paths.
        We then let the set~$X$ be the union of these~$X_C$.
        The construction of~$X$ then implies that~$A[X_t] - X$ is indeed a disjoint union of paths.

        We first show that the size of~$X$ is bounded by~$\alpha$.
        Note that since~$A$ has maximum degree at most~$2\ell^2$ (as in the proof of~\cref{cl:MaxDegreeBound}), the number of components of~$A[C] - X_C$ is at most~$|X_C| \cdot 2 \ell^2 \leq 16 \ell^4$.
        Since~$X_t'$ has size at most~$\torsobound{\ell}$, the graph~$A[X_t]$ has at most~$\torsobound{\ell}$ components and thus~$|X| \le 8 \ell^2 \cdot \torsobound{\ell}  \le \alpha$.
        
        Next, we claim that each vertex~$x$ in~$X$ has at most~$8 \ell^2 \cdot \torsobound{\ell} + \torsobound{\ell} + \torsobound{\ell} \cdot 6 \ell^2 \cdot 16 \ell^4 \leq \alpha$ neighbours in~$H_t$.
        Indeed, suppose for a contradiction that~$x$ has more than~$\torsobound{\ell} + \torsobound{\ell} \cdot 6 \ell^2 \cdot 16 \ell^4$ neighbours in~$H_t - X$.
        We first note that~$x$ is adjacent to at most~$\torsobound{\ell}$ peripheral vertices of~$H_t$, since there are at most~$\torsobound{\ell}$ many peripheral vertices of~$H_t$: They are in one-to-one correspondence to the peripheral vertices of the torso of~$(T, \cX')$ at~$t$ which has size at most~$\torsobound{\ell}$.
        Thus, $x$ has more than~$\torsobound{\ell} \cdot 6 \ell^2 \cdot 16 \ell^4$ many neighbours which are core vertices of~$H_t$ and not contained in~$X$.
        By the pigeonhole principle, there then exists one component~$C$ of the at most~$\torsobound{\ell}$ components of~$A[X_t]$ such that~$x$ has at least~$6 \ell^2 \cdot 16 \ell^4$ neighbours in~$C \setminus X_C$.
        Applying the pigeonhole principle a second time, we find one component~$P$ of the at most~$16 \ell^4$ components of~$A[C] - X_C$ that contains at least~$6 \ell^2$ neighbours of~$x$.
        Since~$A[P]$ is a path by the choice of~$X_C$, we have hence found a subdivision and thus a strong immersion of~$P_{6 \ell^2-1} * v$ in~$H_t$.
        As all the vertices in~$C$ as well as~$x$ are core vertices of the torso~$H_t$, this also is a strong immersion of~$P_{6 \ell^2-1} * v$ in~$G$.
        So by~\cref{lem:PathWithApexContainsWall}, the wall~$W_\ell$ is contained in~$G$ as a strong immersion minor -- a contradiction.

        Since~$X$ has size at most~$\alpha$ and each element of~$X$ has at most~$\alpha$ neighbours in~$H_t$, it remains to prove that~$H_t - X$ is~\alphathin.
        To do so, we now give an enumeration of~$V(H_t - X)$ which we will then show to witness the~\alphathin ness of~$H_t - X$.
        
        Let~$c_1, \dots, c_n$ be an arbitrary enumeration of the part~$X_t'$ of the \tcd~$(T, \cX')$ of~$G'$, let~$C_i$ be the component of~$A$ corresponding to the vertex~$c_i$ of~$G'$, and write~$X_i := X_{C_i}$.
        Then take an enumeration~$v_1, \dots, v_N$ of~$X_t \setminus X$ such that for~$i = 1, \dots, n$, the set~$C_i - X_i$ forms an interval of this enumeration and within such an interval, every component of~$A[C_i] - X_i$ forms an interval with the same linear order which is given by the path that it induces in~$A$ by the choice of~$X_i$.
        For the peripheral vertices of~$H_t$, we finally pick an arbitrary enumeration~$v_{N+1}, \dots, v_M$.

        To show that this enumeration~$v_1, \dots, v_M$ of~$V(H_t-X)$ is as desired, first pick an arbitrary~$i \in \{1, \dots, N\}$.
        Then there is a component~$C_j$ of~$A[X_t]$ with~$v_i \in C_j$, and a component of~$A[C_j] - X_j$ containing~$v_i$; this component of~$A[C_j] - X_j$ then induces a path~$P$ in~$A$ by the choice of~$X_j$ (which might have length~$0$).
        Every edge in~$H_t$ joining~$\{v_1, \dots, v_{i-1}\}$ and~$\{v_{i+1}, \dots, v_M\}$ belongs to one of the following classes:
        \begin{enumerate}[label*=(\arabic*)]
        	\item edges joining vertices of the path~$P$, \label{item:EdgesInsidePath}
        	\item edges joining the distinct components in~$A[C_j] - X_j$, \label{item:EdgesBetweenPaths}
        	\item edges joining distinct components of~$A[X_t']$, and \label{item:EdgesBetweenComponents}
        	\item edges incident to peripheral vertices of~$H_t$. \label{item:EdgesToPeripheralVertices}
      	\end{enumerate}
        We will now separately bound the size of each of these classes~\labelcref{item:EdgesInsidePath,item:EdgesBetweenPaths,item:EdgesBetweenComponents,item:EdgesToPeripheralVertices} in terms of~$\ell$.
        Note that only~\labelcref{item:EdgesInsidePath} depends on the choice of the enumeration; in particular, we will indeed bound the absolute number of all such edges in~$H_t$ in terms of~$\ell$ for~\labelcref{item:EdgesBetweenPaths,item:EdgesBetweenComponents,item:EdgesToPeripheralVertices}.

        For~\labelcref{item:EdgesInsidePath}, the edges joining vertices of~$P$, note that~$V(P)$ is a component of~$A[C_j] - X_j$.
        So there are no edges in~$A$ joining the vertices of~$P$ except the ones contained in~$P$ itself.
        Hence, each two non-adjacent vertices of~$P$ are joined by at most~$\paralleledges{\ell}$ parallel edges. 
        Let~$P_1$ and~$P_2$ be the two components of~$P - v_i$, and note that both~$P_1$ and~$P_2$ consist of vertices of~$G$ since all vertices on~$P$ are in~$X_t$.
        If~$|E_G(P_1, P_2)| > p(\ell) \cdot (6 \ell^2)^2$, then one of~$P_i$, say~$P_1$, has more than~$6 \ell^2$ neighbours in~$P_{2 - i}$.
        Fix any~$v \in P_1$.
        Since~$p(\ell) \ge 6\ell^2$, we can greedily find edge-disjoint paths in~$G[P_1 \cup P_2]$ joining~$v$ and~$6 \ell^2$ of these neighbours in~$P_2$, and we thus find a strong immersion of~$P_{6 \ell^2 - 1} * v$ in~$G[P_1 \cup P_2]$.
        By~\cref{lem:PathWithApexContainsWall}, this graph contains the wall~$W_{\ell}$ as a strong immersion minor which yields the desired contradiction.
        
        With the same type of argument, one can show that there are at most~$p(\ell) \cdot (6 \ell^2)^2$ edges in~$G$ joining any two components of~$A[C_j] - X_j$.
        Recall that the number of components of~$A[C_j] - X_j$ is at most~$16 \ell^4$.
        Thus, there are at most~$\binom{16 \ell^4}{2} \cdot \paralleledges{\ell} \cdot (6 \ell^2)^2$ edges in~$G$ as in~\labelcref{item:EdgesBetweenPaths}, i.e.\ edges joining components of~$A[C_j] - X_j$.

        Next, we turn to~\labelcref{item:EdgesBetweenComponents} and bound the number of edges in~$G$ joining distinct components of~$A[X_t']$.
        Since~$G'$ arises from~$G$ by contracting the components of~$A$, these edges are in one-to-one correspondence with the edges in~$G'[X_t']$.
        Since~$X_t'$ has size at most~$\torsobound{\ell}$ and~$G'$ has maximum degree at most~$\maxdegreebound{\ell}$, there are at most~$\torsobound{\ell} \cdot \maxdegreebound{\ell} / 2$ edges in~$G'[X_t']$ and hence joining distinct components of~$A[X_t']$.

        Along similar lines, we can bound the number of edges in~\labelcref{item:EdgesToPeripheralVertices}, i.e.\ the edges incident to peripheral vertices of~$H_t$, in terms of~$\ell$.
        Indeed, the edges incident to peripheral vertices of~$H_t$ are in one-to-one correspondence to the edges incident to the peripheral vertices of the torso of~$(T, \cX')$ at~$t$.
        Since~$(T, \cX')$ has adhesion at most~$\adhesionbound{\ell}$, each peripheral vertex has degree at most~$\adhesionbound{\ell}$.
        Moreover, since~$X_t'$ has size at most~$\torsobound{\ell}$, there are at most~$\torsobound{\ell}$ peripheral vertices of~$H_t$.
        Altogether, there are at most~$\adhesionbound{\ell} \cdot \torsobound{\ell}$ edges incident to peripheral vertices of~$H_t$.

        All the above bounds show that by the choice of~$\alpha = \alpha(\ell)$, there are at most~$\alpha$ edges joining~$\{v_1, \dots, v_{i-1}\}$ and~$\{v_{i+1}, \dots, v_M\}$ in~$H_t$ for~$i = 1, \dots, N$.
        By the choice of our enumeration, the vertices~$v_{N+1}, \dots, v_M$ are precisely the peripheral vertices of~$H_t$.
        So for~$i = N+1, \dots, M$, all edges joining~$\{v_1, \dots, v_{i-1}\}$ and~$\{v_{i+1}, \dots, v_M\}$ in~$H_t$ are incident to peripheral vertices of~$H_t$.
        As shown above there are at most~$\adhesionbound{\ell} \cdot \torsobound{\ell} \le \alpha(\ell)$ such edges.
        This completes the proof of this claim.
    \end{claimproof}
    
    \noindent By~\cref{cl:AdhesionBound} and~\cref{cl:AlmostAlphaThin}, our constructed {\tcd}~$(T, \cX)$ of~$G$ now is as desired, completing the proof of~\cref{main:NoWallImpliesAlphaThin}.
\end{proof}

\section*{Acknowledgements}
    
We are very grateful to Nathan Bowler for the suggestion to bound the size of the neighbourhood of the vertices which we delete in the definition of~\almostalphathin ness.
This enabled us to obtain the actual equivalence given by~\cref{main:NoWallImpliesAlphaThin} and~\cref{main:AlphaThinImpliesNoWall} instead of an approximate one.

The second author gratefully acknowledges support by doctoral scholarships of the Studienstiftung des deutschen Volkes and the Cusanuswerk -- Bisch\"{o}fliche Studienf\"{o}rderung.

\bibliographystyle{amsplain}
\bibliography{collective.bib}

\end{document}